\begin{document}
 
\newtheorem{lemma}{Lemma}[section]
\newtheorem{prop}[lemma]{Proposition}
\newtheorem{cor}[lemma]{Corollary}
  
\newtheorem{thm}[lemma]{Theorem}
\newtheorem{Mthm}[lemma]{Main Theorem}
\newtheorem{con}{Conjecture}
\newtheorem{claim}{Claim}
\newtheorem{ques}{Question}

\theoremstyle{definition}
  
\newtheorem{rem}[lemma]{Remark}
\newtheorem{rems}[lemma]{Remarks}
\newtheorem{defi}[lemma]{Definition}
\newtheorem{ex}[lemma]{Example}
                                                                                
\newcommand{\C}{{\bf C}}
\newcommand{\R}{{\bf R}}
\newcommand{\Q}{{\bf Q}}
\newcommand{\Z}{{\bf Z}}
\newcommand{\N}{{\bf N}}
\newcommand{\LS}{{\mathcal L}(S^1)}
\newcommand{\LDS}{L^2(S^1)}
\newcommand{\CS}{{\mathcal C}(S^1)}
\newcommand{\DS}{{\mathcal C}^\infty(S^1)}
\newcommand{\ch}{{\mathcal F}}

\newcommand{\cg}{{\bf C}G}
\newcommand{\eog}{\ell^1(G)}
\newcommand{\etg}{\ell^2(G)}
\newcommand{\epg}{\ell^p(G)}
\newcommand{\eqg}{\ell^q(G)}
\newcommand{\vNG}{{\mathcal N}\! G}
\newcommand{\ZNG}{Z({\mathcal N}\! G)}
\newcommand{\ag}{{\mathcal A}G}
\newcommand{\crg}{C^*_rG}
\newcommand{\cmg}{C^*G}
\newcommand{\cH}{\mathcal H}
\newcommand{\BH}{{\mathcal B}({\mathcal H})}
\newcommand{\UH}{{\mathcal U}({\mathcal H})}
\newcommand{\BG}{{\mathcal B}({\etg})}
\newcommand{\UG}{{\mathcal U}({\etg})}

\title{Introduction to the Rapid Decay Property}
\author{Indira Chatterji}
\thanks{{The author is partially supported by the Institut Universitaire de France (IUF)}}
\begin{abstract}This is an introduction to the Rapid Decay property, with a survey of known results and equivalent definitions of this property. We also discuss in details the easy case when $G=\Z$. Everything in this paper is well-known by different sets of people.\end{abstract}

\maketitle

In this paper, $G$ will be a countable group, that we assume finitely generated by a finite generating set $S$. The \emph{word length} of an element $\gamma\in G$ is the minimal number of elements of $S$ needed to express $\gamma$. The group $G$ acts on $\etg$ via the (left) regular representation, and one extends this action to $\cg$ and the bounded operators on $\ell^2(G)$ (see Section \ref{basics} for the definitions). 
\begin{defi}\label{RD1erjet}A finitely generated group $G$ has \emph{the Rapid Decay property} (we shall say RD for short) if there are constants $C$ and $D$ such that for any $R\in\N$ and any $f\in\cg$ such that $f$ is supported on elements shorter than $R$, the following inequality holds:
$$\|f\|_*\leq CR^D\|f\|_2,\eqno{RD(1)}\label{RD1}$$
where $\|\ \|_*$ is the operator norm of elements in $\cg$ acting on $\ell^2(G)$ via the linearization of the left regular representation and $\|\ \|_2$ is the $\ell^2$-norm on $\cg$ (see Section~\ref{basics}). 
\end{defi}
Because of the constant $C$, the above inequality is interesting only for large $R$'s. One can notice that whereas the left hand side of the above inequality depends on the group operation (via the regular representation) the right hand side doesn't and only depends on the geometry of the group in a ball of radius $R$. Depending on how the group operation allows to combine $\gamma$'s and $\mu$'s in $G$ to obtain elements $\gamma\mu^{-1}$ of length shorter than $R$, one may or may not be able to control the left hand side. Groups with the Rapid Decay property are exactly the ones for which we can control the left hand side. Several other equivalent definitions of with the Rapid Decay property will be discussed in Section~\ref{equivalences}.

\smallskip

The terminology \emph{Rapid Decay} comes from Connes' original definition, equivalent to RD(1) above, which is as follows (see Definitions \ref{reducedCstar} and \ref{rdfunctions} for all the definitions).
\begin{defi}A group has the Rapid Decay property if 
$$H^\infty_\ell(G)\subseteq C^*_r(G)\eqno{RD(2)}\label{RD2}$$
that is, the set of rapidly decaying functions on $G$ is a subalgebra of the reduced C*-algebra of $G$. 
\end{defi}
Indeed, it is a classical result that any abelian and unital C*-algebra is the algebra of continuous functions on a a compact space (its dual), so philosophically, a non-abelian unital C*-algebra (such as, for instance, the reduced C*-algebra of a finitely generated group), should correspond to the continuous functions on a non-commutative space. So, for a finitely generated group $G$, admitting that the reduced C*-algebra of $G$ corresponds to continuous functions, it is natural to look for the smooth functions. In Section~\ref{Z} we shall throughly see the case of the group $\Z$. Even though this case is technically very easy it illustrates nicely Connes' definition.

\medskip

Amenable groups is an important class of groups that give us the simplest examples of non RD groups, that is amenable groups with super-polynomial growth. An example is the following semi-direct product:%
$$\Z^2\rtimes_{\left(\begin{array}{cc}2&1\\ 1&1\end{array}\right)}\Z\simeq\{\left(\begin{array}{cc}1& \!\!0\ \ \,0\\ \begin{array}{c}a\\b\end{array}&\left(\begin{array}{cc}2&1\\ 1&1\end{array}\right)^n\end{array}\right)|\ (a,b)\in\Z^2,\  n\in\Z\}<SL_3(\Z).$$
From the definition of Rapid Decay property it is clear that the property is inherited by subgroups with the induced length, and hence having an amenable subgroup of  super-polynomial growth is an obstruction to the Rapide Decay property, showing that for instance $SL_3(\Z)$ does not have that property. The above example also shows that in general the Rapid Decay property is not stable under taking extensions, however, in a short exact sequence of finitely generated groups
$$\{e\}\to Z\to G\to Q\to\{e\}$$
then $G$ has the Rapid Decay property if and only if $Q$ has the Rapid Decay property and $Z$ has the Rapid Decay property for the induced length from $G$, see \cite{Ga} for the general statement, and Remark \ref{otherlength} for a discussion on the Rapid Decay property and length functions.
\goodbreak 
\section*{Picture of the situation} 
Here is a non-exhaustive and redundant list of what is known regarding the property of Rapid Decay:
\subsection*{Examples of discrete groups with the Rapid Decay property}
\begin{itemize}
\item Polynomial growth \cite{Jolissaint} 
\item Free groups \cite{Haagerup} 
\item Hyperbolic groups \cite{Jolissaint}, \cite{Harpe} 
\item Coxeter groups \cite{wKim}     
\item Discrete cocompact $\Gamma$ in 
\begin{itemize}\item  $SL_3(\Q_p)$ \cite{RRS} 
\item $SL_3(\R)$ and $SL_3(\C)$ \cite{Lafforgue}  
\item $SL_3(\mathbb{H})$ and $E_{6(-26)}$ \cite{moi} 
\item  products of the above \cite{moi} 
\end{itemize}
\item All rank one lattices  \cite{wKim} and more generally hyperbolic group relatively to subgroups with the Rapid Decay property \cite{DS}.
\item Cocompact  cubical  CAT(0)  \cite{wKim}
\item Mapping class groups \cite{BM}
\item Braid groups \cite {BP} and \cite{BM}
\item Large type Artin groups \cite{CHRArtin}
\item 3-manifold groups not containing $Sol$ \cite{Gau}
\item Wise non-Hopfian group \cite{BPW}
\item Small cancellation groups \cite{AD} and \cite{Osin}.
\end{itemize}
\subsection*{Examples of discrete groups without the Rapid Decay property}
\begin{itemize}
\item Amenable groups with exponential growth \cite{Jolissaint}  
\item $BS(n,m)$
\item $\Z^{\infty}$, Thompson's group
\item $SL_n(\Z), n\geq 3$ and  $Sp_n(\Z), n\geq 4$ and more generally non-uniform lattices in higher rank
\item $GL_2({\mathbb{F}}_p[t,t^{-1}])$ \cite{wKim}
\item Intermediate growth groups  \cite{Jolissaint} 
\item $\Z*\Z^2*\Z^3*\cdots$ (see \cite{Sa} for a length on that group)
\end{itemize}
\subsection*{Examples of groups for which the Rapid Decay property is open}
\begin{itemize}
\item Out$({\bf F}_n)$, $n\geq 3$
\item Cocompact lattices in $SL_n(\Q_p)$ or $SL_n(\R)$, $n\geq 4$ and more generally semisimple Lie groups, see Conjecture \ref{Valette} below
\item Artin groups
\item Cocompact CAT(0) groups
\end{itemize}
According to Jolissaint in \cite{Jolissaint}, the Rapid Decay property is preserved by free products and some amalgamated products, as well as some central extensions and semi-direct products. Moreover according to Ciobanu, Holt and Rees in \cite{CHR}, the Rapid Decay property is preserved by graph products, hence one gets more examples combining the above. However it in unknown if the Rapid Decay property is preserved under quasi-isometries, although the methods used so far to establish the Rapid Decay property are.

\medbreak

Actually the situation is much simpler than what the above list expresses: among finitely presented groups, the only known obstruction to the Rapid Decay property is to contain an amenable subgroup of super-polynomial growth for the induced length, phenomena easily observed in the presence for instance of an exponentially distorted copy of $\Z$. In the case of finitely generated groups however, Sapir in \cite{Sa} constructs a 2-generator groups without the Rapid Decay property and without super-polynomial growth amenable subgroups.
\subsection*{Acknowledgements}I would like to thank Mark Sapir and Christophe Pittet for comments on a preliminary version, as well as Adrien Boyer for discussions on the Rapid Decay property characterization and reminding me of \cite{Perrone}.
\section{Short historical survey and applications}
 First established for free groups by Haagerup in \cite{Haagerup}, the Rapid Decay property has been introduced and studied as such by Jolissaint in \cite{Jolissaint}, who notably established it for groups of polynomial growth, and for classical hyperbolic groups. The extension to Gromov hyperbolic groups is due to de la Harpe in \cite{Harpe}. One of the earliest definitions was given by Connes in \cite{ConNCG} in a non-commutative geometry setting, and the first important application of the Rapid Decay property is in Connes and Moscovici's work \cite{CoMo}, proving the Novikov conjecture for Gromov hyperbolic groups. Providing the first examples of higher rank groups, Ramagge, Robertson and Steger in \cite{RRS} proved that the Rapid Decay property holds for $\tilde{A}_2$ and $\tilde{A}_1\times\tilde{A}_1$ groups, and Lafforgue, using a very nice quasification of their proof, established it for cocompact lattices in $SL_3({\bf R})$ and $SL_3({\bf C})$ in \cite{Lafforgue}. Lafforgue proved this property as part of his big results on the Baum-Connes conjecture, that we discuss a bit more below. His result on the Rapid Decay property was generalized in \cite{moi} to cocompact lattices in $SL_3({\bf H})$ and $E_{6(-26)}$ as well as in a finite product of rank one Lie groups. We saw that $SL_n({\bf Z})$ for $n\geq 3$ (and more generally any non-cocompact lattice in higher rank simple Lie groups as they contain exponentially distorted copies of $\Z$, see \cite{LMR}) does not have the Rapid Decay property, and Valette conjectured the following.
 \begin{con}[Valette \cite{Valette}]\label{Valette}Cocompact lattices in a semisimple Lie group (real or $p$-adic) have the Rapid Decay property.\end{con}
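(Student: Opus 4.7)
The plan is to extend the Ramagge-Robertson-Steger \cite{RRS} and Lafforgue \cite{Lafforgue} strategy, which settles the conjecture in the $SL_3$ case, to higher rank; the rank-one case is already covered by \cite{wKim}, and cocompact lattices in products of rank-one and $SL_3$-type factors by \cite{moi}. By the convolution reformulation of RD(1), the goal is to show that for $f_1, f_2 \in \mathbf{C}\Gamma$ supported in a ball of radius $R$ in the word metric of $\Gamma$, one has $\|f_1 \ast f_2\|_2 \le C R^D \|f_1\|_2 \|f_2\|_2$. Since $\Gamma$ is cocompact, $\Gamma$ is quasi-isometric to the symmetric space or Bruhat-Tits building $X$ of $G$, and the word length on $\Gamma$ is bi-Lipschitz equivalent to the distance from a fixed origin in $X$.

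First I would reduce to a ``spherical'' convolution estimate via the Cartan decomposition $G = KAK$: distances in $X$ are encoded by the Cartan projection into the closed positive Weyl chamber $\mathfrak{a}^+$. Decomposing $f_1$ and $f_2$ according to their Weyl-chamber valued displacement, the convolution $f_1 \ast f_2$ becomes a sum of bi-$K$-invariant pieces whose operator norms can in principle be bounded using sharp estimates on Harish-Chandra spherical functions $\varphi_\lambda$ (via the Harish-Chandra expansion and the Gindikin-Karpelevich formula for the $c$-function), together with a purely combinatorial count of ``retraction patterns'' at a given Weyl-chamber level. This is essentially what Ramagge-Robertson-Steger achieve combinatorially for the affine building of type $\tilde{A}_2$, and what Lafforgue achieves by quasifying that count to the continuous setting of $SL_3(\R)$ and $SL_3(\C)$.

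The hard part will be handling the walls of $\mathfrak{a}^+$ when the real rank of $G$ is at least $3$. For $\tilde{A}_2$ and $SL_3$ the Weyl chamber is two-dimensional and its singular set consists of a single one-dimensional stratum, which is exactly what keeps the RRS and Lafforgue arguments manageable. In higher rank, the stratification of $\mathfrak{a}^+$ by walls of various dimensions produces a combinatorial explosion of cross-terms, and the interaction between nearby strata seems very difficult to bound polynomially in $R$. Establishing a uniform polynomial bound on the convolution operator between radial functions supported near different walls is the main obstacle, and the reason the conjecture remains open. A successful proof would plausibly require either a new structural ingredient---perhaps an induction on rank via parabolic subgroups combined with a relative form of RD in the spirit of the Drutu-Sapir relatively hyperbolic result \cite{DS}---or a strengthened functional-analytic tool replacing the purely combinatorial counting that suffices in rank at most two.
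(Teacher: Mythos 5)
The statement you were asked to prove is labelled a \emph{Conjecture} in the paper, and the paper contains no proof of it: this is Valette's conjecture and it is open. Your proposal does not close it. What you have written is an accurate survey of the strategy that succeeds through rank two --- the combinatorial argument of Ramagge--Robertson--Steger for $\tilde A_2$ buildings \cite{RRS}, Lafforgue's quasification for cocompact lattices in $SL_3(\R)$ and $SL_3(\C)$ \cite{Lafforgue}, the extensions to $SL_3(\mathbb{H})$, $E_{6(-26)}$ and products in \cite{moi}, and the rank-one case in \cite{wKim} --- followed by an honest explanation of exactly where that strategy fails in rank at least three. Your own text concedes the point: you identify the uniform polynomial control of convolution between pieces supported near different walls of the positive Weyl chamber as ``the main obstacle, and the reason the conjecture remains open.'' A paragraph that ends by naming the unsolved step is a statement of the problem, not a solution to it.

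Concretely, the gap is the entire analytic core. After the (correct and standard) reductions --- replacing RD(1) by the convolution estimate RD(4)/RD(5) on triples of points, passing to the symmetric space or building via cocompactness, and stratifying by the Cartan projection --- one must still prove a bound of the form $\|f_1*f_2\|_2\leq CR^D\|f_1\|_2\|f_2\|_2$ uniformly over the combinatorially many configurations of triples whose pairwise ``vector-valued distances'' sit near distinct strata of the chamber. No estimate on spherical functions or $c$-functions currently known yields this; the paper itself records that the reduction from \cite{RRS} or \cite{Lafforgue} produces triples on which the computation cannot be carried out, already for lattices in $Sp_4(\Q_p)$ (see Talbi \cite{Talbi}). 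Your suggested alternatives (induction on rank via parabolics, a relative RD in the spirit of Drutu--Sapir \cite{DruSa}) are plausible directions but are not developed to the point of a lemma one could check. So the proposal should be assessed as a research plan reproducing the known state of the art, with the decisive step missing.
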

In rank one the situation is different as we show with Ruane in \cite{wKim}; there all lattices have the Rapid Decay property. This uses that those lattices are hyperbolic relative to polynomial growth groups. More generally, Drutu and Sapir \cite{DruSa} show that any hyperbolic group relatively to subgroups with the Rapid Decay property, has the Rapid Decay property as well. More groups have recently been added to the list of groups with the Rapid Decay property, including mapping class groups by Behrstock and Minsky in \cite{BM} (those include braid groups \cite {BP} and \cite{BM}), large type Artin groups by Ciobanu Hold and Rees in \cite{CHRArtin}, as well as Wise non-Hopfian group by Barr\'e and Pichot in \cite{BPW} or coarse median groups, see Bowditch \cite{Bow}. Some small cancellation groups satisfy the Rapid Decay property, according to  Arjantseva and Dru\c tu \cite{AD} or Osin \cite{Osin}.

\subsection*{Methods for proving the Rapid Decay property} Unfortunately, all the methods used so far to establish the Rapid Decay property are very similar and give a bad exponent. Those methods rely on noticing that the left hand side of RD(1) is in fact a computation on triples of points (this is easier to see by looking at RD(5) in Proposition \ref{RD}), and then reducing the computation to triples of points that are manageable. This is the reason why the Rapid Decay property is still open for lattices in $Sp_{4}({\mathbb Q}_p)$ for instance (or more generally groups of Conjecture \ref{Valette}): the reduction from \cite{RRS} or \cite{Lafforgue}  gives triples on which the computation cannot be carried on, see Talbi's work \cite{Talbi} for more on the situation on $p$-adic lattices. Sapir's recent survey \cite{Sa} gives a nice account on the similarities and differences of those methods.

\subsection*{Locally compact compactly generated groups} An approach to prove Conjecture \cite{Valette} that so far gave only false hopes is to compare the lattice and the ambient group. Indeed, all definitions of the Rapid Decay property extend quite easily to locally compact groups (see \cite{Jolissaint}), but in this settings the Rapid Decay property is inherited by {\it open} subgroups only. However, the following result is fairly elementary.
\begin{thm}[Jolissaint \cite{Jolissaint}]Let $G$ be a locally compact group and let $\Gamma< G$ be a discrete cocompact subgroup in $G$, and let $\ell$ be a continuous length function on $G$. If $\Gamma$ has the Rapid Decay property with respect to $\ell$ (restricted to $\Gamma$), then so does $G$ (with respect to $\ell$).\end{thm}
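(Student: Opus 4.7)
My plan is to transfer the RD inequality from $\Gamma$ to $G$ by expanding a generic matrix coefficient of left convolution by $f$ along a relatively compact fundamental domain, converting it into an integral of $\Gamma$-matrix coefficients to which RD of $\Gamma$ applies. By cocompactness, fix a relatively compact Borel fundamental domain $F$ for the left $\Gamma$-action on $G$, so $G = \bigsqcup_{\gamma \in \Gamma} \gamma F$; continuity of $\ell$ yields $M < \infty$ with $\max(\ell(x),\ell(x^{-1})) \leq M$ for every $x \in F$, and the presence of the cocompact discrete subgroup $\Gamma$ forces $G$ to be unimodular. It then suffices to control $|\langle f*h_1, h_2\rangle_{L^2(G)}|$ for $h_1, h_2 \in C_c(G)$ of unit $L^2$-norm and $f \in C_c(G)$ supported in $\{g : \ell(g) \leq R\}$.

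Decomposing $y = \gamma' x'$ and $g = \gamma x$ with $\gamma, \gamma' \in \Gamma$ and $x, x' \in F$, the matrix coefficient rewrites as
$$\iint_{F \times F} \sum_{\gamma, \gamma' \in \Gamma} f(\gamma' x')\, h_1\!\bigl(x'^{-1}(\gamma'^{-1}\gamma)x\bigr)\, \overline{h_2(\gamma x)}\, dx\, dx'.$$
For each fixed $(x, x') \in F \times F$, introduce the $\Gamma$-functions $f^{x'}(\gamma') := f(\gamma' x')$, $h_1^{x', x}(\mu) := h_1(x'^{-1}\mu x)$, and $h_2^x(\gamma) := h_2(\gamma x)$. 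After re-indexing $\mu = \gamma'^{-1}\gamma$, the inner double sum becomes exactly the $\ell^2(\Gamma)$ matrix coefficient $\langle f^{x'} * h_1^{x', x}, h_2^x\rangle$. Since $\ell(\gamma' x') \leq R$ forces $\ell(\gamma') \leq R + M$, the function $f^{x'}$ is supported in the $\Gamma$-ball of radius $R + M$, and RD of $\Gamma$ with respect to $\ell|_\Gamma$ yields the pointwise (in $x, x'$) bound $C (R+M)^D\, \|f^{x'}\|_2\, \|h_1^{x', x}\|_2\, \|h_2^x\|_2$.

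Integrating over $F \times F$ and applying Cauchy--Schwarz, the problem reduces to three $L^2$-identities that follow from $G = \bigsqcup_\gamma \gamma F$ and unimodularity, namely $\int_F \|f^{x'}\|_{\ell^2(\Gamma)}^2\,dx' = \|f\|_{L^2(G)}^2$, $\int_F \|h_2^x\|_{\ell^2(\Gamma)}^2\,dx = \|h_2\|_{L^2(G)}^2$, and (via the bijective change of variables $(\mu, x) \mapsto \mu x$ together with left-invariance of Haar) $\iint_{F \times F} \|h_1^{x', x}\|_{\ell^2(\Gamma)}^2\, dx\, dx' = |F|\, \|h_1\|_{L^2(G)}^2$. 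Combining yields $\|f\|_* \leq C \sqrt{|F|}\, (R+M)^D\, \|f\|_2$, which is the desired RD$(1)$ inequality for $(G, \ell)$ after absorbing the shift $R \mapsto R + M$ into the constants.

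The delicate step is recognizing the inner double sum as a genuine $\Gamma$-matrix coefficient: although $h_1$ is a function on $G$, inside the sum it is sampled only at $x'^{-1}\mu x$ with $\mu = \gamma'^{-1}\gamma \in \Gamma$, so the $\Gamma$-convolution structure resurfaces cleanly once $(x, x')$ is frozen. Unimodularity of $G$ and relative compactness of $F$ are precisely what keep the final book-keeping under control; the quasi-invariance that might be expected to twist the estimate is absent because $\Gamma$-translation preserves Haar measure and $F$ has finite volume.
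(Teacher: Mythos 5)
The paper states this theorem without proof (it is quoted from Jolissaint with the remark that it is ``fairly elementary''), so there is no in-text argument to compare against. Your proof is correct and is, in substance, the standard argument from Jolissaint's original paper: decompose $G=\bigsqcup_{\gamma\in\Gamma}\gamma F$ over a relatively compact fundamental domain, freeze $(x,x')\in F\times F$ to turn the matrix coefficient $\langle f*h_1,h_2\rangle_{L^2(G)}$ into an integral of genuine $\ell^2(\Gamma)$-matrix coefficients, apply RD for $\Gamma$ at radius $R+M$ (the shift $M$ coming from continuity of $\ell$ on $\overline{F}$), and close with Cauchy--Schwarz on $F\times F$. All the steps check out: the grouping $\bigl(\|f^{x'}\|\,\|h_2^x\|\bigr)\cdot\|h_1^{x',x}\|$ in the Cauchy--Schwarz step is the right one, the three $L^2$-identities hold, and the finitely supported character of $f^{x'}$, $h_1^{x',x}$, $h_2^x$ (compact support meeting the discrete set $\Gamma$) legitimizes invoking RD for $\Gamma$ in the form RD(4). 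One small remark: unimodularity, while true here (and your justification via the cocompact discrete subgroup is valid), is not actually needed --- every identity you use follows from left-invariance of the left Haar measure alone, since you only ever translate on the left by elements of $\Gamma$ or by $x'^{-1}$.
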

As an approach to Conjecture~\ref{Valette} one could hope that the converse of the above theorem holds true, since all semisimple Lie groups have the Rapid Decay property (according to \cite{CPSC}, it is mainly a reformulation of Kunze-Stein phenomena \cite{Hertz}, see also \cite{Boyer2} for a simpler proof). However such a guess seems mainly supported by a lack of counter-examples.
\subsection*{Radial Rapid Decay} Considering \emph{radial} functions only has been done by Valette in \cite{ValRad} to establish the Raid Decay property for radial functions for groups acting on buildings.  Here we recall that given a length function on a group $G$, the set of radial functions (in $\cg$ or even $\etg$) is the set of functions whose values are constant on spheres, that is whose values only depend on the length of an element. This result of Valette has been extended to cocompact lattices in semisimple Lie groups for the length induced from the Riemannian metric by Perrone in \cite{Perrone2}. However, the set of radial functions seems too small to get the full Rapid Decay property for $G$. For instance, in the case of locally compact non-unimodular groups, those cannot have the Rapid Decay property according to \cite{JiSch}, but those do have a version of Rapid Decay property on radial functions, see \cite{CPSC}. However, in the case of discrete groups, it would be interesting to have examples of discrete groups that do satisfy the Rapid Decay inequality on radial functions but that do not have the Rapid Decay property.
\subsection*{The Baum-Connes conjecture}The Rapid Decay property gained attention in 2001 after Lafforgue's work on the Baum-Connes conjecture showed its importance in this context and indeed, for a large class of groups like cocompact lattices in semisimple Lie groups, the Rapid Decay property would be the last step in establishing the Baum-Connes conjecture. Lafforgue's work was the first instance where it was shown that groups with property (T) satisfied the conjecture. We will not attempt to explain the Baum-Connes conjecture here and refer the interested reader to Valette's lecture notes \cite{Valette}. We will just mention what it says, namely that, for a discrete group $G$, the assembly map
$$\mu_i:K^G_i(\underline{E}G)\to K_i(C^*_r(G))\eqno{(BC)}$$
should be an isomorphism for $i=0,1$, where the left hand side is the equivariant $K$-homology of $\underline{E}G$, the classifying space for proper actions for $G$, and is a group that algebraic topologists understand and can compute in many cases. The right hand side is the topological $K$-theory of $\crg$, the reduced C*-algebra of $G$, and this is a group that is interesting for analysts but that remains quite mysterious. Lafforgue in \cite{LaffBC} showed that, for a very large class of groups (containing for instance all lattices in Lie groups, amenable groups, a-T-menable groups, hyperbolic groups, CAT(0) groups) a modified version of the Baum-Connes conjecture holds true, where the reduced C*-algebra is replaced by any \emph{unconditional completion}, that is a Banach algebra completion of the group ring that only depends on the absolute values of elements. The Rapid Decay property is providing such a completion that in addition is known to have the same K-theory as the one of the reduced C*-algebra (see Proposition \ref{HsBanach}). Notice that despite having the Rapid Decay property, the Baum-Connes conjecture is still open for mapping class groups because nobody has yet proved that they belong to Lafforgue's class (or that they are \emph{strongly bolic}).
\subsection*{Random walks}A symmetric random walk on $G$ is given by a finitely supported symmetric function $f$ with support generating $G$ and such that $\sum_{\gamma\in G}f(\gamma)=1$. For such a random walk, the return probability of the random walk after $2n$ steps is given by $f^{(2n)}(e)$, the $2n$ convolution of $f$ with itself. The asymptotics of the return probability are very interesting, however as soon as the group is non-amenable (which according to Kesten amounts to the spectral radius of $f$ being strictly smaller than 1), the decay is exponential. A finer information is hence given by
$$\varphi_f:{\bf N}\to{\bf R}, n\mapsto \rho_f^{-2n}f^{(2n)}(e)\eqno(RW)$$
where $\rho_f$ denotes the spectral radius of $f$, that is the operator norm of $f$ acting on $\etg$. This asymptotics in (RW) can depend on the function and not only of the group, as shown by Cartwright's example \cite{Car}, see also \cite{Woess}. This asymptotics has been computed by \cite{Lal} in the case of free groups, by \cite{CaGl} in the case of free products of $\Z^n$'s and recently by Gou\"ezel in the case of hyperbolic groups \cite{Gouezel}. In the cases where $G$ has the Rapid Decay propery, we can get bounds for this asymptotics, as follows.
\begin{defi}For a group $G$ with the Rapid Decay property, the \emph{Rapid Decay exponent} of $G$ is defined as
$$\alpha=\inf\{s\in{\bf R}\hbox{ such that }H^s_{\ell}(G)\subseteq C^*_r(G)\}.$$\end{defi}
The constant $\alpha$ depends on the group $G$ only and not on the choice of a compact generating set giving the length function.  We recall a very simple remark from \cite{CPSC}, Section 7.
\begin{thm}[Chatterji, Pittet, Saloff-Coste]\label{CPS}Let $G$ be a group with the Rapid Decay property, with $D$ as its Rapid Decay exponent. Then, for every $f$ a finitely supported symmetric probability measure on $G$, there exists $c_f>0$ such that for any $n\in{\bf N}$ we have
$$\varphi_f(n)\geq c_f n^{-2D},$$
for any $s\geq\alpha$.\end{thm}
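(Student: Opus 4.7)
The plan is to apply the defining RD(1) inequality directly to the iterated convolutions $f^{(n)}$. First, I would unpack what symmetry of $f$ buys us twice over. Since $f(\gamma^{-1})=f(\gamma)$, the convolution operator $\lambda(f)$ on $\etg$ is self-adjoint, so its spectral radius equals its operator norm: $\rho_f=\|f\|_*$, and therefore $\|f^{(n)}\|_*=\|\lambda(f)^n\|_*=\rho_f^n$. The same symmetry yields the identity
$$f^{(2n)}(e)=\sum_{\gamma\in G}f^{(n)}(\gamma)f^{(n)}(\gamma^{-1})=\|f^{(n)}\|_2^2,$$
so the return probability is expressed as the square of an $\ell^2$-norm to which RD(1) can be applied.

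Next, let $R_0$ be the maximal word length of an element in the support of $f$, so that $f^{(n)}$ is supported in the ball of radius $nR_0$. Applying RD(1) with exponent $D$ to $f^{(n)}$ gives
$$\rho_f^n=\|f^{(n)}\|_*\leq C(nR_0)^{D}\|f^{(n)}\|_2=C(nR_0)^{D}\sqrt{f^{(2n)}(e)}.$$
Squaring, rearranging, and dividing by $\rho_f^{2n}$ produces
$$\varphi_f(n)=\rho_f^{-2n}f^{(2n)}(e)\geq C^{-2}R_0^{-2D}\,n^{-2D},$$
so one may take $c_f=C^{-2}R_0^{-2D}$, which depends only on $f$ (through $R_0$) and the RD constants.

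There is essentially no serious obstacle: once the two consequences of symmetry are in place, the proof is a two-line calculation. The one point that deserves care is the precise relationship between the RD exponent $\alpha$ and the exponent $D$ appearing in RD(1): strictly speaking, the Sobolev embedding $H^s_\ell(G)\subseteq\crg$ for $s>\alpha$ translates into RD(1) with any $D>\alpha$, with the constant $C$ deteriorating as $D\searrow\alpha$. This accounts for the ``for any $s\geq\alpha$'' qualification in the statement and does not affect the polynomial order of the lower bound.
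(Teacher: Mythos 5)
Your proof is correct and follows essentially the same route as the paper's: both use symmetry to get $f^{(2n)}(e)=\|f^{(n)}\|_2^2$ and $\|f^{(n)}\|_*=\rho_f^n$, note that $f^{(n)}$ is supported in a ball of radius proportional to $n$, and then apply RD(1) to $f^{(n)}$. Your version is if anything slightly more careful about the constant after squaring and about why $\alpha$ can be traded for an RD(1) exponent $D$.
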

\begin{proof}Since $f$ is symmetric, $f^{(2n)}(e)=\|f^{(n)}\|^2_2$ and $\rho_f^{2n}=\|f^{(n)}\|^2_*$. Moreover, $f^{(n)}$ is supported on a ball of radius $dn$ where $d$ is the diameter of the support of $f$. Hence, RD(1) gives us that
$$\rho_f^{2n}\leq Cd^{2D}n^{2D}f^{(2n)}(e)$$
and we conclude setting $c_f=C^{-1}d^{-2D}$.
\end{proof}
In case where $\rho_f<1$ (which is exactly when $G$ is non-amenable), then there is a constant $c_f>0$ so that $\varphi_f(n)\leq c_fn^{-1}$ (see \cite{CPSC2}). Hence it is very interesting to determine the best possible Rapid Decay constant of a group $G$, as it will give a bound on the possible asymptotic behaviors of random walks on $G$. However, in the case of hyperbolic groups, Gou\"ezel in \cite{Gouezel} shows that the exact asymptotic is 3/2, whereas the Rapid Decay property will only give a bound between 1 and 3, but that could also be due to the techniques used to establish the Rapid Decay property.
\subsection*{Other applications} Other applications of the Rapid Decay property include 
\begin{itemize}
\item Connes and Moscovici's work proving the Novikov conjecture for Gromov hyperbolic groups use the Rapid Decay property for hyperbolic groups \cite{CoMo}.
\item Brodzki and Niblo's result that groups that satisfy the property of Rapid Decay with respect to a conditionally negative length function have the metric approximation property \cite{BroNi}. 
\item De la Harpe, Robertson and Valette in \cite{HRV} established that in a group $G$ with the Rapid Decay property, then $\rho(\mathbb{1}_S)=|S|^{1/2}$ if and only if $S$ generates a free semi-group, where $\rho(\mathbb{1}_S)$ is the spectral radius of the caracteristic function of a finite subset $S$ of $G$.
\item he Rapid Decay property for hyperbolic groups has been used by Grigorchuck and Nagnibeda in \cite{GriNa} to compute the convergence radius of the complete growth serie of a hyperbolic group.
\item Nagnibeda and Pak in \cite{NP} show that a finitely generated non-amenable group with the Rapid Decay property has infinitesimally small spectral radius.
\item Further use of the Rapid Decay property can be found in the work of Nevo \cite{Ne1} and \cite{Ne2}, or Antonescu and Christensen in \cite{AC}.
\end{itemize}
\section{Basic definitions}\label{basics}
\begin{defi}
A \emph{length function} on a discrete group $G$ is a function $\ell:G\to{\R}_+$ satisfying: 
\begin{itemize}
\item[(1)] $\ell(\gamma)=\ell(\gamma^{-1})$ for any $\gamma\in G$.
\item[(2)] $\ell(\gamma\mu)\leq \ell(\gamma)+\ell(\mu)$ for any $\gamma,\mu\in G$.
\item[(3)] $\ell(e)=0$, where $e=1$ denotes the identity element in $G$.\end{itemize}
The constant map $G\to\R_+,\gamma\mapsto 0$ is the \emph{trivial length}. If $G$ is generated by some finite subset $S$, then the \emph{algebraic word length} $\ell_S:G\to\N$ is a length function on $G$, where, for $\gamma\in G$, $\ell_S(\gamma)$ is the minimal length of $\gamma$ as a word on the alphabet $S\cup S^{-1}$, that is,
$$\ell_S(\gamma)=\min\{n\in{\N}|\gamma=s_1\dots s_n,\,s_i\in S\cup S^{-1}\}.$$\end{defi}
Length functions and metric spaces are basically the same thing and given a metric space $(X,d)$ on which a group $G$ acts by isometries (ie $d(\gamma\cdot x,\gamma\cdot y)=d(x,y)$ for any $x,y\in X$), then 
$$\ell(\gamma):=d(x_0,\gamma\cdot x_0)$$
is a length function, for any fixed $x_0\in X$. Conversely, given a length function $\ell$ on $G$, if $N=\{\gamma\in G|\ell(\gamma)=0\}$, we set $X=G/N$ and $d(\gamma N,\mu N)=\ell(\mu^{-1}\gamma)$. Then one can check that $(X,d)$ is a well-defined metric space on which $G$ acts by isometries.

A length function $\ell'$ \emph{dominates} another length function $\ell$ if there are two constants $A,B\geq 1$ such that
$$\ell(\gamma)\leq A\ell'(\gamma)+B,$$
for any $\gamma\in G$. We say that $\ell$ and $\ell'$ are \emph{equivalent} if they dominate each other. 

\begin{rem}\label{wordlengthdom}It is straightforward to check that if $G$ is finitely generated, any word length dominates any other length and hence two word lengths are always equivalent (as long as the generating sets are finite).\end{rem}
\begin{defi} We denote by $\cg$ the \emph{complex group ring} (or algebra) of the group $G$. It is a complex vector space with basis indexed by the elements of $G$, and the multiplicative structure is given by the group law. Analysts view $\cg$ as the set of functions $f:G\to{\C}$ with finite support, it is a ring for pointwise addition and convolution:
$$f*g(\gamma)=\sum_{\mu\in G}f(\mu)g(\mu^{-1}\gamma).\eqno{(f,g\in{\cg},\gamma\in G)}.$$
Algebraists tend to think of $\cg$ as formal linear complex combinations of elements in $G$, that is formal finite sums $\sum_{\gamma\in G}f_{\gamma}\gamma$ with the $f_{\gamma}$'s in $\C$, and the multiplicative structure is given by:
$$(\sum_{\gamma\in G}f_{\gamma}\gamma)(\sum_{\mu\in G}g_{\mu}\mu)=\sum_{\gamma,\mu\in G}f_{\gamma}g_{\mu}\gamma\mu.$$
Those two models are easily seen to be equal identifying the algebraists element $\gamma\in\cg$ with the function $\delta_{\gamma}:G\to\C$ taking value one in $\gamma$ and zero elsewhere and checking that $\delta_{\gamma}*\delta_{\mu}=\delta{\gamma\mu}$, so that a finitely supported function $f$ is just the finite combination $\sum_{\gamma\in G}f(\gamma)\delta_{\gamma}$. We shall without further warnings use both models, with a preference for the analyst's one.\end{defi}
It is a standard fact that any Hilbert space admits an orthonormal basis, so that any infinite dimensional Hilbert space with countable basis is isomorphic to $\ell^2({\N})$. We shall only consider separable Hilbert spaces, so in fact we shall only be dealing with ${\C}^n$'s or $\ell^2({\N})$. However, in the case of a discrete group $G$, writing $\ell^2(G)$ has the advantage of recording the unitary representation of $G$ on $\ell^2(G)$.
\begin{defi}\label{regrep}The \emph{(left) regular representation} of $G$ is the map
\begin{eqnarray*}G&\to&\UG\\
\gamma&\mapsto&\{\xi\mapsto\gamma(\xi):=\delta_\gamma*\xi\}
\end{eqnarray*}
This means that an element $\gamma\in G$ shifts $\xi\in\etg$ by pre-composition, since $\gamma(\xi)(g)=\delta_\gamma*\xi(g)=\xi(\gamma^{-1}g)$. Here $\UG$ denotes the unitary operators of $\etg$, and one can check that indeed, $\delta_\gamma^*=\delta_{\gamma^-1}$, so that $\delta_\gamma*\delta_\gamma^*=\delta_e$ which is the identity operator. The \emph{coefficients of the regular representation} are then given by
$$\{\left<\gamma(\xi),\eta\right>\}_{\gamma\in G},$$
where $\xi,\eta\in\etg$ are of norm one.
\end{defi}\label{conv}
Extending the regular representation by linearity induces an injective map
\begin{eqnarray*}\cg&\to&\BG\\
f&\mapsto&\sum_{\gamma\in G}f(\gamma)\delta_{\gamma}
\end{eqnarray*}
which is just the left convolution by $f$. Here $\BG$ denotes the algebra of bounded operators on $\etg$, the norm of such an operator being given by
$$\|f\|_*=\sup\{\|f*\xi\|_2\,|\,\xi\in\etg,\,\|\xi\|_2=1\}.$$
For $\xi\in\etg$ one has:
$$\|f*\xi\|_2=\|\sum_{\gamma\in G}f(\gamma)\delta_{\gamma}*\xi\|_2\leq\sum_{\gamma\in G}|f(\gamma)|\|\delta_{\gamma}*\xi\|_2=\sum_{\gamma\in G}|f(\gamma)|\|\xi\|_2=\|f\|_1\|\xi\|_2,$$
so that the supremum over the $\xi$'s of norm one will never exceed the right hand side of the above inequality and taking a limit in $\eog$ we hence deduce that 
$$\|f\|_*\leq\|f\|_1,$$ 
for all $f\in\eog$.
\begin{defi}\label{reducedCstar}
The \emph{reduced C*-algebra of $G$}, denoted by $\crg$ is the closure (for the operator norm) of $\cg$ acting on $\etg$ in its left regular representation, namely
$$\crg=\overline{\cg}^{\|\ \|_*}\subseteq\BG.$$
\end{defi}
\begin{rem}\label{prodscal} The inner product on $\etg$ is given by
$$\left<\xi,\eta\right>=\sum_{\gamma\in G}\xi(\gamma)\overline{\eta(\gamma)},$$
hence $f*\xi(e)=\sum_{\gamma\in G}f(\gamma)\xi(\gamma^{-1})=\left<f,\xi^*\right>$.
Taking $\xi=\delta_e$ one gets that $\|f\|_2\leq\|f\|_*\leq\|f\|_1$ showing that $\eog\subseteq\crg\subseteq\etg$. These are in general very rough estimates of $\|f\|_*$ unless $f$ is positive and $G$ is amenable and indeed Leptin's caracterization of amenability is the following.
\begin{thm}[Leptin \cite{Lep}]\label{Lep}Let $G$ be a group, then $G$ is amenable if and only of for any $f\in\R^+G$ then $\|f\|_*=\|f\|_1$.\end{thm}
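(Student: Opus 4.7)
The plan is to prove the two implications separately, each by reducing to a classical characterization of amenability: the Følner condition for the forward direction, and Hulanicki's existence of almost invariant vectors in $\etg$ for the converse. Since $\|f\|_*\le\|f\|_1$ is already established in the preceding discussion, each implication amounts to constructing unit vectors in $\etg$ that realize the correct bound in the other direction.

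For the forward direction, I would assume $G$ is amenable and fix $f\in\R^+ G$ supported on a finite set $F\subseteq G$ (enlarged to contain $e$ if necessary, which does not change $\|f\|_1$). Pick a Følner sequence $(F_n)_{n\in\N}$ and test against the normalized indicators $\xi_n:=\mathbb{1}_{F_n}/\sqrt{|F_n|}$, which have $\|\xi_n\|_2=1$. On the interior $I_n:=\bigcap_{\gamma\in F}\gamma F_n$, positivity of $f$ prevents cancellation, so
$$(f*\xi_n)(g)=\frac{1}{\sqrt{|F_n|}}\sum_{\gamma\in F}f(\gamma)=\frac{\|f\|_1}{\sqrt{|F_n|}}\qquad(g\in I_n).$$
A standard Følner estimate gives $|F_n\setminus I_n|/|F_n|\to 0$, hence
$$\|f*\xi_n\|_2^2\ge\frac{|I_n|}{|F_n|}\,\|f\|_1^2\longrightarrow\|f\|_1^2,$$
so $\|f\|_*\ge\|f\|_1$ and equality follows.

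For the converse, I would assume $\|f\|_*=\|f\|_1$ for every $f\in\R^+ G$ and apply the hypothesis to $f=\mathbb{1}_F$ for an arbitrary finite symmetric $F\subseteq G$ with $e\in F$. This produces unit vectors $\xi_n\in\etg$ with $\|\sum_{\gamma\in F}\gamma\xi_n\|_2\to|F|$. Expanding
$$\left\|\sum_{\gamma\in F}\gamma\xi_n\right\|_2^2=|F|+\sum_{\gamma\ne\mu}\langle\gamma\xi_n,\mu\xi_n\rangle$$
and comparing with the limit value $|F|^2$, the bound $\mathrm{Re}\langle\gamma\xi_n,\mu\xi_n\rangle\le 1$ forces each off-diagonal real part to tend to $1$. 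Therefore $\|\gamma\xi_n-\mu\xi_n\|_2^2=2-2\mathrm{Re}\langle\gamma\xi_n,\mu\xi_n\rangle\to 0$, and taking $\mu=e$ yields $\|\gamma\xi_n-\xi_n\|_2\to 0$ for every $\gamma\in F$. Exhausting $G$ by finite symmetric sets and diagonalizing then produces a sequence of almost invariant unit vectors in $\etg$, which by Hulanicki's theorem is equivalent to amenability of $G$.

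The main obstacle I anticipate is in the converse, namely converting the single spectral identity $\|\mathbb{1}_F\|_*=|F|$ into a vector sequence that is \emph{simultaneously} almost invariant under every element of $F$; the expansion above achieves this at one stroke, but relies crucially on the positivity hypothesis, since for signed $f$ the operator norm captures only the spectral radius rather than $\|f\|_1$. Once almost-invariance is in hand, the identification with amenability is a standard packaging via Hulanicki's theorem (or Reiter's condition~$(P_2)$).
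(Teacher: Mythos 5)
The paper offers no proof of this statement: it is quoted as Leptin's theorem and attributed to \cite{Lep}, so there is nothing internal to compare your argument against. Your proof is correct and is the standard self-contained argument. In the forward direction, testing the convolution operator against the normalized indicators $\mathbb{1}_{F_n}/\sqrt{|F_n|}$ of a F\o lner sequence is exactly right; the two points that carry the argument are both present, namely that on $I_n=\bigcap_{\gamma\in F}\gamma F_n$ the convolution takes the constant value $\|f\|_1/\sqrt{|F_n|}$ (and this identity, not just a lower bound, uses $f\geq 0$, since for signed $f$ one would only get $|\sum_\gamma f(\gamma)|$), and that the F\o lner condition gives $|I_n|/|F_n|\to 1$ (for which your enlargement of $F$ to contain $e$ is needed so that $I_n\subseteq F_n$). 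In the converse, the expansion of $\bigl\|\sum_{\gamma\in F}\gamma\xi_n\bigr\|_2^2$ together with the elementary observation that a sum of $N$ terms each bounded by $1$ can only tend to $N$ if every term tends to $1$ correctly converts the single norm identity $\|\mathbb{1}_F\|_*=|F|$ into simultaneous almost-invariance of the $\xi_n$ under all of $F$; the diagonal extraction over an exhaustion of $G$ and the appeal to Hulanicki (weak containment of the trivial representation in the regular one) then close the argument. Since the ambient paper assumes $G$ finitely generated, you could even skip the exhaustion by taking $F=S\cup S^{-1}\cup\{e\}$ for a finite generating set $S$, as almost-invariance under generators propagates to all of $G$. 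No gaps.
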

The property of Rapid Decay will give a sharper estimate in case where the group is non-amenable. At that stage and using Definition \ref{RD1erjet} as well as Leptin's caracterization of amenability (Theorem \ref{Lep} above)  we can show the following.
\begin{thm}[Jolissaint \cite{Jolissaint}]\label{polgr}Let $G$ be a finitely generated amenable group. Then $G$ has the Rapid Decay property if and only if it has polynomial growth.\end{thm}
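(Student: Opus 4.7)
The plan is to prove both implications by testing RD(1) against characteristic functions of balls and exploiting the trivial bounds between $\|\,\cdot\,\|_1$, $\|\,\cdot\,\|_2$ and $\|\,\cdot\,\|_*$, using Leptin's theorem (Theorem \ref{Lep}) to upgrade one of these bounds to an equality in the amenable case. Write $B_R = \{\gamma \in G : \ell_S(\gamma) \leq R\}$ for the ball of radius $R$, so that polynomial growth means $|B_R| \leq P(R)$ for some polynomial $P$.

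\medskip

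\textbf{Polynomial growth $\Rightarrow$ RD.} Suppose $|B_R| \leq P R^{d}$ for some $P,d$. For any $f \in \cg$ supported in $B_R$, the Cauchy--Schwarz inequality applied to $f = f \cdot \mathbb{1}_{B_R}$ gives
$$\|f\|_1 \;\leq\; |B_R|^{1/2}\,\|f\|_2 \;\leq\; \sqrt{P}\,R^{d/2}\,\|f\|_2.$$
Combining with the basic estimate $\|f\|_* \leq \|f\|_1$ established in Remark \ref{prodscal}, we obtain RD(1) with constants $C = \sqrt{P}$ and $D = d/2$. Note this direction does not use amenability at all.

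\medskip

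\textbf{RD (plus amenability) $\Rightarrow$ polynomial growth.} This is where Leptin's characterization does the work. Apply RD(1) to the positive function $f = \mathbb{1}_{B_R}$, which is supported in $B_R$. On the one hand $\|f\|_2 = |B_R|^{1/2}$; on the other hand, since $G$ is amenable and $f \geq 0$, Theorem \ref{Lep} yields $\|f\|_* = \|f\|_1 = |B_R|$. Substituting into RD(1),
$$|B_R| \;=\; \|f\|_* \;\leq\; C R^{D}\,\|f\|_2 \;=\; C R^{D}\,|B_R|^{1/2},$$
so $|B_R|^{1/2} \leq C R^{D}$, i.e.\ $|B_R| \leq C^{2} R^{2D}$, which is polynomial growth.

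\medskip

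There is no real obstacle here: both directions are just one-line manipulations once the right test function is chosen. The only conceptual point worth emphasizing is that amenability enters solely through the equality $\|\mathbb{1}_{B_R}\|_* = \|\mathbb{1}_{B_R}\|_1$; for non-amenable groups the operator norm can be much smaller than the $\ell^1$ norm, which is precisely why RD is informative (and attainable) in that setting, whereas for amenable groups the two inequalities $\|f\|_* \leq \|f\|_1$ and $\|f\|_1 \leq |B_R|^{1/2}\|f\|_2$ are essentially tight on indicator functions and thus force growth to be polynomial.
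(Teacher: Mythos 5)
Your proof is correct and follows essentially the same route as the paper: Cauchy--Schwarz together with $\|f\|_*\leq\|f\|_1$ for the polynomial-growth-implies-RD direction, and Leptin's equality $\|\mathbb{1}_{B_R}\|_*=\|\mathbb{1}_{B_R}\|_1$ applied to the indicator of a ball for the converse. The only (harmless) difference is cosmetic: you make explicit that amenability is not needed in the first direction, whereas the paper simply notes it follows from polynomial growth.
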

\begin{proof} Let $r\geq 0$ and denote by $\mathbb{1}_r$ the characteristic function of a ball of radius $r$. First assume that $G$ has polynomial growth (and hence is amenable). For $f$ supported on a ball of radius $r$ we compute
$$\|f\|_*\leq\|f\|_1=\left<\mathbb{1}_r,f\right>\leq\|\mathbb{1}_r\|_2\|f\|_2,$$
and conclude since $\|\mathbb{1}_r\|_2=\sqrt{\sharp B(r)}$ is a polynomial by assumption. The last inequality is just Cauchy-Schwarz inequality. 

Conversely assume that $G$ is amenable with the Rapid Decay property, hence
$$\|\mathbb{1}_r\|_*=\|\mathbb{1}_r\|_1\leq P(r)\|\mathbb{1}_r\|_2.$$
Since $\|\mathbb{1}_r\|_1=\sharp B(r)=\|\mathbb{1}_r\|_2^2$ we conclude that $\sqrt{\sharp B(r)}\leq P(r)$ and hence $G$ has polynomial growth.
\end{proof}
\end{rem}
\begin{defi}\label{rdfunctions} Consider for $f$ in $\cg$ and $\ell$ a length function on $G$ a {\it weighted $\ell^2$ norm}, depending on a parameter $s\geq 0$ and given by
$$\|f\|_{\ell,s}=\sqrt{\sum_{\gamma\in G}|f(\gamma)|^2(1+\ell(\gamma))^{2s}}.$$ 
The \emph{$s$-Sobolev completion} of $\cg$ is the completion of $\cg$ with respect to this norm and is denoted by $H^s_\ell(G)$. The \emph{functions of rapid decay} are given by
$$H^\infty_\ell(G)=\bigcap_{s\geq 0}H^s_\ell(G)$$
\end{defi}
If $s=0$ or $\ell=0$, then $H^s_\ell(G)=\etg$. The $H^s_\ell(G)$'s are Hilbert spaces for the scalar product 
$$\left<f,g\right>_{\ell}=\sum_{\gamma\in G}f(\gamma)\overline{g(\gamma)}(1+\ell(\gamma))^{2s},$$ 
$H^\infty_\ell(G)$ is a Fr\'echet space, but a priori none of them are algebras unless $G$ has the Rapid Decay property, in which case, Lafforgue's adaptation of a result of Jolissaint \cite{JolK} gives the following.
\begin{prop}[Lafforgue \cite{Lafforgue}]\label{HsBanach}Let $G$ be a group with the Rapid Decay property for a length function $\ell$.  Then there is $s_0$ big enough so that for any $s\geq s_0$, the Hilbert space $H^s_\ell(G)$ is in fact a Banach sub-algebra of $\crg$, which is dense and has the same K-theory as $\crg$.\end{prop}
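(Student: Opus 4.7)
The plan is to verify the three assertions---continuous embedding of $H^s_\ell(G)$ into $\crg$ as a Banach algebra, density, and K-theory agreement---in turn, with the K-theory part being the genuine difficulty.

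For the continuous embedding $H^s_\ell(G)\hookrightarrow\crg$, the natural idea is a shell decomposition: write $f\in\cg$ as $f=\sum_{n\geq 0}f_n$ with $f_n$ the restriction of $f$ to the annulus $\{\gamma:n\leq\ell(\gamma)<n+1\}$. Each $f_n$ is supported in a ball of radius $n+1$, so $RD(1)$ yields $\|f_n\|_*\leq C(n+1)^D\|f_n\|_2$. On the $n$-th shell $(1+\ell(\gamma))^{s}\geq(1+n)^{s}$, hence $(1+n)^s\|f_n\|_2\leq\|f_n\|_{\ell,s}$; disjointness of supports gives $\sum_n\|f_n\|_{\ell,s}^2=\|f\|_{\ell,s}^2$. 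Summing $\|f\|_*\leq\sum_n\|f_n\|_*$ and applying Cauchy--Schwarz to $\sum_n(n+1)^{D-s}\cdot(n+1)^s\|f_n\|_2$ produces $\|f\|_*\leq C'\|f\|_{\ell,s}$ as soon as $\sum_n(n+1)^{2(D-s)}<\infty$, that is $s>s_0:=D+1/2$.

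For the algebra structure, the key pointwise bound is $1+\ell(\gamma)\leq(1+\ell(\mu))+(1+\ell(\mu^{-1}\gamma))$, from which $(1+\ell(\gamma))^s\leq 2^s\bigl((1+\ell(\mu))^s+(1+\ell(\mu^{-1}\gamma))^s\bigr)$. Plugging this into $(f*g)(\gamma)=\sum_\mu f(\mu)g(\mu^{-1}\gamma)$ and taking $\ell^2$-norms yields
$$\|f*g\|_{\ell,s}\leq 2^s\bigl(\|F*|g|\|_2+\||f|*G\|_2\bigr),$$
with $F=(1+\ell)^s|f|$ and $G=(1+\ell)^s|g|$. Bounding the first convolution by $\|F\|_2\||g|\|_*$ and the second by $\||f|\|_*\|G\|_2$ (standard $\ell^2$-convolution inequalities, valid since left and right regular representations have the same operator norm) reduces everything to controlling $\||f|\|_*$ and $\||g|\|_*$; since pointwise absolute value preserves $\|\cdot\|_{\ell,s}$, the first step dominates these by $C\|f\|_{\ell,s}$ and $C\|g\|_{\ell,s}$ whenever $s\geq s_0$. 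This produces submultiplicativity $\|f*g\|_{\ell,s}\leq K\|f\|_{\ell,s}\|g\|_{\ell,s}$, and rescaling the norm by $K$ makes $H^s_\ell(G)$ a Banach subalgebra of $\crg$. Density is then automatic since $\cg\subseteq H^s_\ell(G)$ is already dense in $\crg$.

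The K-theory claim is the main obstacle. By the Bost--Jolissaint principle recorded in \cite{JolK}, a dense Banach subalgebra that is spectrally invariant in the ambient Banach algebra (equivalently, stable under holomorphic functional calculus) inherits its K-theory; so it suffices to show that for $f\in H^s_\ell(G)$ with $\|1-f\|_*<1$ the Neumann series $\sum_n(1-f)^n$ converges in $\|\cdot\|_{\ell,s}$. The submultiplicativity from the previous step yields only the useless geometric bound $\|g^n\|_{\ell,s}\leq(K\|g\|_{\ell,s})^n$, so the real work is to upgrade this to a polynomial-in-$n$ estimate $\|g^n\|_{\ell,s}\leq P(n)\|g\|_*^n$, perhaps after enlarging $s_0$. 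Following Jolissaint, this refinement is obtained by shell-decomposing $g^n$ and iterating the $\ell^2$-convolution inequality so that $n-1$ factors of $\|g\|_*$ appear against a single factor $\|g\|_{\ell,s'}$ at a slightly higher Sobolev exponent, with the polynomial growth of the weight $(1+\ell)^s$ absorbed by the same Cauchy--Schwarz argument as in step one. Once this polynomial bound is in hand the Neumann series converges absolutely in $H^s_\ell(G)$, spectral invariance holds, and the K-theory isomorphism follows formally.
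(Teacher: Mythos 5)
The paper does not actually prove this proposition---it is stated as a citation to Lafforgue \cite{Lafforgue} and Jolissaint \cite{JolK}---so there is no in-paper argument to compare against. Your first two steps are correct and standard: the shell decomposition giving $\|f\|_*\leq C'\|f\|_{\ell,s}$ for $s>D+1/2$ is essentially the $(1)\Rightarrow(3)$ argument of Proposition \ref{RD}, and the submultiplicativity estimate via $(1+\ell(\gamma))^s\leq 2^s\bigl((1+\ell(\mu))^s+(1+\ell(\mu^{-1}\gamma))^s\bigr)$, together with the left/right convolution bounds (legitimate here because $\ell(\gamma)=\ell(\gamma^{-1})$ makes $\|\cdot\|_{\ell,s}$ invariant under $g\mapsto g^*$), is exactly how the algebra structure is obtained. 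Density is immediate. You also correctly identify that the K-theory claim reduces, via the density theorem, to spectral invariance, and that the geometric bound from submultiplicativity is useless for this.

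The gap is in the key estimate you assert for the K-theory step. The bound $\|g^n\|_{\ell,s}\leq P(n)\,\|g\|_*^{\,n}$ does \emph{not} follow from ``iterating the $\ell^2$-convolution inequality'' of your second step. The moment you distribute the weight $(1+\ell(\gamma))^s$ over the $n$ factors of $g^{*n}(\gamma)=\sum_{\gamma_1\cdots\gamma_n=\gamma}g(\gamma_1)\cdots g(\gamma_n)$, you must first pass to absolute values inside the sum over factorizations, and the convolution inequalities then produce $\|\,|g|\,\|_*^{\,n-1}$ (or, after applying RD again, $\|g\|_{\ell,s_0}^{\,n-1}$) rather than $\|g\|_*^{\,n-1}$. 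The operator norm of the pointwise absolute value can be much larger than $\|g\|_*$: for $g$ with oscillating signs on $[-N,N]\subseteq\Z$ one has $\|g\|_*\sim\sqrt{N\log N}$ while $\|\,|g|\,\|_*=\|g\|_1\sim N$. In particular, for the Neumann series of $h=1-a^*a/\|a^*a\|_*$ one knows $\|h\|_*<1$ but has no control on $\|\,|h|\,\|_*$, so the argument as described does not close. Jolissaint's actual proof needs a further idea: split $g$ itself (not $g^n$) into a short part and a long tail at a radius depending on both $n$ and the sphere $S_m$ on which $g^n$ is being estimated, so that $n-1$ factors carry the honest operator norm of the (slightly perturbed) short part, while the rapid decay of $\|\cdot\|_{\ell,s}$ kills the long tail fast enough to absorb the weight $(1+m)^{2s}$. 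Without that, the spectral invariance---which is the genuinely nontrivial content of the proposition---remains unproved.
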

An alternate definition of the Rapid Decay property, giving a sharper estimate for the operator norm, is the following.
\begin{defi}Let $G$ be a finitely generated group, the $G$ has \emph{the Rapid Decay property} if there are constants $c\geq 1$ and $s\geq 0$ such that, for every $f\in\crg$
$$\|f\|_*\leq c\|f\|_{\ell,s}\eqno{RD(3)}$$
for one (hence any) algebraic length $\ell$ on $G$.
\end{defi}
\begin{rem}The Rapid Decay property depends on the choice of a length on $G$. However, because of Remark \ref{wordlengthdom} it is straightforward to see that if $G$ finitely generated has the Rapid Decay property with respect to any length, then it will have it for the word length as it dominates all the others. Moreover, the group $G$ will have the Rapid Decay property for one length if and only if it has it for any equivalent length.\end{rem}
\begin{rem}\label{Johnny}Taking the complex group algebra is a functor from the category of groups to the one of complex algebras, and it can be viewed both as a covariant or contravariant functor. In fact, algebraists tend to think of a covariant functor, meaning that given a group homomorphism $\varphi:G\to\Lambda$ one gets an algebra homomorphism $\varphi_*:\cg\to\C\Lambda$ by $\varphi_*(\sum_{\gamma\in G}f_{\gamma}\gamma)=\sum_{\gamma\in G}f_{\gamma}\varphi(\gamma)$, whereas analysts see a contravariant functor with $\varphi^*:\C\Lambda\to\cg$ by $\varphi^*(f)=f\circ\varphi$. The same holds for $\ell^1$, but not the reduced C*-algebra, and this is an important problem in the Baum-Connes conjecture, as the rest of the objects defining it are all functorial.\end{rem}
\section{The case $G=\Z$}\label{Z}
Let us look at the Rapid Decay property in the case where the group is $\Z$.  Using inequality RD(1) it is straightforward to show that $\Z$ has the Rapid Decay property, as for $f\in\cg$ supported on elements of length less than $R$, since the operator norm is always bounded above by the $\ell^1$ norm, we have that
$$\|f\|_*\leq\|f\|_1=\sum_{n=-R}^R|f(n)|\leq\sqrt{\sum_{n=-R}^R1}\sqrt{\sum_{n=-R}^R|f(n)|^2}=\sqrt{2}R^{1/2}\|f\|_2$$
The last inequality is Cauch-Schwartz inequality, and the same computation in fact establishes the Rapid Decay property for groups with polynomial volume growth, and the exponent D being half of the growth, see Theorem \ref{polgr}.

Let us now look at Connes' original definition for the case $G=\Z$. To do so, first let $S^1=\{z\in\C||z|=1\}$ denote the circle and $\LS$ denote the Laurent polynomials over the circle, that is, finite sums of the form
$$A=\sum_{n\in\Z}A_nz^n.$$
We denote by $\LDS$ the square integrable functions (for the Lebesgue measure on $S^1$), by $\CS$ the continuous functions and by $\DS$ the smooth functions. We shall see that the Fourier transform is an algebra isomorphism $\C\Z\simeq\LS$, that extends to an isometric isomorphism of Hilbert spaces $\ell^2(\Z)\simeq\LDS$, and an isometric isomorphism of $C^*$-algebras $C^*_r(\Z)\simeq\CS$ which restricted to $H^{\infty}(\Z)$ has image $\DS$.

Let us start by recalling that $\LS$ is an algebra for pointwise addition and multiplication, and notice that $\LS$ is a prehilbert space, as it is a complex vector space and
\begin{eqnarray*}\left<\ ,\ \right>:\LS\times\LS&\to &\C\\
(A,B)&\mapsto &\int_{S^1}A(z)\overline{B(z)}dz\end{eqnarray*}
defines an inner product, that by definition extends to the Hilbert space
$$L^2(S^1)=\{F:S^1\to\C|\int_{S^1}|F(z)|^2dz\leq\infty\}$$
 Moreover, $\{z^n\}_{z\in\Z}$ is an orthonormal basis for $\LS$ since
$$\int_{S^1}z^ndz=\left\{\begin{array}{cc}0 & \hbox{ if }n\ne 0\\
1 & \hbox{ if }n=0\end{array}\right.$$
and a Hilbert basis for $L^2(S^1)$.
\begin{defi}Let $G$ be a finitely generated abelian group, its \emph{Pontryagin dual} is the compact abelian group
$$\hat{G}={\rm Hom}(G,S^1).$$
The \emph{Fourier transform} is the map
\begin{eqnarray*}\ch:\cg&\to &{\mathcal C}(\hat{G})\\
f&\mapsto & \ch(f):\{x\mapsto\sum_{\gamma\in G}f(\gamma)x(\gamma)\}.\end{eqnarray*}\end{defi}
In case where $G=\Z$, then $\hat{G}=S^1$ since a homomorphism is then determined by its value on 1, and the Fourier transform reads
\begin{eqnarray*}\ch:\C\Z&\to &\CS\\
f&\mapsto & \ch(f)(z)=\sum_{n\in\Z}f(n)z^n,\end{eqnarray*}
which is an isomorphism from $\C\Z$ to $\LS$ since it is a bijection on the basis. Moreover, since
$$\ch(\delta_n*\delta_m)=\ch(\delta_{n+m})=z^{n+m}=z^nz^m=\ch(\delta_n)\ch(\delta_m)$$
we conclude that $\ch$ is an algebra isomorphism. We have hence proved the following.
\begin{prop}The algebras $\C\Z$ and $\LS$ are isomorphic.\end{prop}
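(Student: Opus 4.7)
The plan is to verify that the Fourier transform $\ch\colon\C\Z\to\LS$, whose formula was already written down above, is the desired algebra isomorphism. Since both sides are complex algebras and $\ch$ is linear by construction, three things remain to check: that $\ch$ lands in $\LS$, that it is bijective, and that it is multiplicative.

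For the first two points, I would observe that any $f\in\C\Z$ has finite support, so $\ch(f)(z)=\sum_{n\in\Z}f(n)z^n$ is a finite sum and hence a genuine element of $\LS$. Conversely, any Laurent polynomial $A=\sum_{n\in\Z}A_nz^n$ is the image of the finitely supported function $n\mapsto A_n$, so $\ch$ is surjective. Injectivity reduces to the linear independence of the family $\{z^n\}_{n\in\Z}$ in $\LS$, which was essentially recorded in the excerpt via the orthogonality relation $\int_{S^1}z^n\,dz=0$ for $n\neq 0$. Thus $\ch$ sends the basis $\{\delta_n\}_{n\in\Z}$ of $\C\Z$ bijectively onto the basis $\{z^n\}_{n\in\Z}$ of $\LS$.

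For multiplicativity, it is enough to check the identity on basis elements and extend by bilinearity. This is exactly the computation already displayed in the preceding paragraph, namely
$$\ch(\delta_n*\delta_m)=\ch(\delta_{n+m})=z^{n+m}=z^nz^m=\ch(\delta_n)\ch(\delta_m).$$
Since convolution on $\C\Z$ and pointwise multiplication on $\LS$ are both bilinear, this identity on basis elements upgrades to $\ch(f*g)=\ch(f)\ch(g)$ for all $f,g\in\C\Z$.

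There is no real obstacle here: the proposition is essentially a reformulation of the fact that $\{z^n\}_{n\in\Z}$ is a $\C$-basis of $\LS$ and that the $\Z$-indexing matches the two multiplications. The interesting content lies not in this step but in its sequels, namely extending $\ch$ to an isometric isomorphism $\ell^2(\Z)\simeq\LDS$ and then to an isomorphism $C^*_r(\Z)\simeq\CS$ sending $H^\infty(\Z)$ onto $\DS$, which will illustrate Connes' definition RD(2) in the case of $G=\Z$.
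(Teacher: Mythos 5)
Your proof is correct and follows the same route as the paper: the paper likewise observes that $\ch$ is a bijection on the bases $\{\delta_n\}_{n\in\Z}$ and $\{z^n\}_{n\in\Z}$ and verifies multiplicativity on basis elements via $\ch(\delta_n*\delta_m)=z^{n+m}=\ch(\delta_n)\ch(\delta_m)$. Your additional remark that injectivity rests on the linear independence of the $z^n$, guaranteed by their orthogonality, just makes explicit what the paper leaves implicit.
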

Let us now turn to $\ell^2(\Z)$ and $\LDS$. It can seem obvious now that the Fourier transform $\ch$ extends to an isometric isomorphism between $\ell^2(\Z)$ and $\LDS$, but there are some subtle points worth being mentionned. Indeed if $\C\Z$ is clearly dense in $\ell^2(\Z)$, it is less clear why $\LS$ should be dense in $\LDS$. It is hence starightforward to check that the Fourier transform $\ch$ extends to an isometric injective linear map $\ch:\ell^2(\Z)\to\LDS$, but surjectivity is a bit subtle. Indeed, to show that $\ch$ is surjective, it is tempting to pick $F\in\LDS$ and try to express it as a series $\sum_{n\in\Z}a_nz^n$, where $a$ such that $a(n)=a_n$ belongs to $\ell^2(\Z)$, we even know that the $a_n$'s should be the \emph{Fourier coefficients} of $F$, given by
$$a_n=\int_{S^1}F(z)z^{-n}dz,$$
since the map ${\mathcal I}:\LS\to\C\Z$, $A\mapsto{\mathcal I}(A)(n)=\int_{S^1}A(z)z^{-n}dz$ is an inverse for $\ch$ on $\C\Z$. As such, this doesn't quite work (but almost) because there are continuous functions whose Fourier series diverge at some points (those examples have been provided by Du Bois-Reymond in 1876, see e.g. \cite{Bhatia} p. 50).

However if we admit the very well-known fact that the space of smooth functions $\DS$ is dense in $\LDS$, we can use the following particular case of Dirichlet's Theorem (see \cite{Bhatia} for the general case) to show that $\LS$ is dense in $\DS$ and hence in $L^2(S^1)$ as well.
\begin{thm}The Fourier series of an $F\in\DS$ uniformly converges to $F$ at every point.\end{thm}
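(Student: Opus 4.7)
The plan is to exploit the smoothness of $F$ to obtain rapid decay of its Fourier coefficients, derive uniform convergence of the Fourier series by the Weierstrass M-test, and then identify the uniform limit with $F$.

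Parametrizing $z=e^{i\theta}$ with the normalized measure $dz=\frac{d\theta}{2\pi}$, the Fourier coefficients are
$$a_n=\int_{S^1}F(z)z^{-n}\,dz=\frac{1}{2\pi}\int_0^{2\pi}F(e^{i\theta})e^{-in\theta}\,d\theta.$$
For $n\neq 0$, integration by parts is legitimate because $F$ is smooth and $2\pi$-periodic, so all boundary terms vanish. After $k$ integrations by parts the factor $e^{-in\theta}$ contributes $(in)^{-k}$ and each derivative falls on $F$, which yields $|a_n|\leq\|F^{(k)}\|_\infty/|n|^k$ for every $k\in\N$. In particular, taking $k=2$ gives $|a_n|\leq C/n^2$ for some $C>0$.

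By the Weierstrass M-test applied to the pointwise bound $|a_n z^n|\leq|a_n|$ on $S^1$, the symmetric partial sums $S_N(z)=\sum_{|n|\leq N}a_n z^n$ converge absolutely and uniformly on $S^1$ to a continuous function $G\in\CS$. Uniform convergence permits term-by-term integration against $z^{-m}$, so the Fourier coefficients of $G$ coincide with those of $F$; thus $H:=F-G$ is a continuous function on $S^1$ whose Fourier coefficients all vanish. To conclude $H\equiv 0$ I would invoke Fej\'er's theorem, which asserts that the Ces\`aro means of the Fourier series of any continuous function on $S^1$ converge to it uniformly. Since these means are identically zero for $H$, we obtain $H\equiv 0$, hence $G=F$.

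The genuinely non-trivial step is the final identification $G=F$: absolute convergence of $\sum a_n z^n$ alone does not tell us the limit is $F$, and bridging this gap needs an input beyond what the excerpt develops, typically Fej\'er's theorem or a Stone--Weierstrass argument applied to the $*$-subalgebra $\LS\subset\CS$ (which is closed under conjugation since $\overline{z^n}=z^{-n}$ on $S^1$ and separates points). Step 1 is routine integration by parts exploiting periodicity, and step 2 is a direct application of the M-test, so the whole argument rests on smoothness converting into rapid decay of Fourier coefficients --- a pleasing echo of the main theme of the paper.
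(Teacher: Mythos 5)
Your proof is correct and complete. Note, however, that the paper does not actually prove this statement: it is quoted as a special case of Dirichlet's theorem with a pointer to \cite{Bhatia}, where the classical proof runs through the Dirichlet kernel and a localization argument, and yields pointwise (and, with uniform hypotheses, uniform) convergence under assumptions far weaker than smoothness (piecewise $C^1$, bounded variation). Your route is the standard shortcut available precisely because $F\in\DS$: repeated integration by parts (with vanishing boundary terms by periodicity) gives $|a_n|\leq C/n^2$, the M-test then gives absolute and uniform convergence of the symmetric partial sums to some $G\in\CS$, and the only real work is the identification $G=F$, which you correctly isolate and settle via Fej\'er's theorem (uniqueness of Fourier coefficients for continuous functions). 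Two small remarks. First, you are right that some external input is unavoidable at the identification step, and your choice matters for the logic of the surrounding text: the paper uses this theorem to \emph{deduce} density of $\LS$ in $\LDS$, so appealing to completeness of $\{z^n\}$ in $\LDS$ or to Parseval there would be circular, whereas Fej\'er or Stone--Weierstrass (which the paper invokes independently a few lines later) is not. Second, your $F^{(k)}$ should strictly be the $k$-th derivative of $\theta\mapsto F(e^{i\theta})$ rather than the expression $\sum i^kn^kA_nz^n$ the paper writes, but this is only a normalization by powers of $iz$ and affects nothing. What your approach buys over Dirichlet's is brevity and the stronger conclusion of absolute convergence; what it costs is the restriction to (at least) $C^2$ regularity, which is harmless here.
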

Let us now turn to the $C^*$-algebras and recall that $\CS$ is a $C^*$-algebra for pointwise addition and multiplication and for the sup norm, given by
$$\|F\|=\sup\{|F(z)|,z\in S^1\}\ \ (F\in\CS).$$
It is a standard application of the Stone-Weierstra\ss\ theorem that the space $\LS$ is dense in $\CS$ for the supremum norm (but notice that in view of Dirichlet's theorem and the examples of continuous functions having Fourier expansions divergent in some point, we know that the polynomials approching pointwise a continuous function is not necessarily a partial sum of its Fourier expansion).
\begin{lemma}Take $F\in\CS$ and consider the operator
\begin{eqnarray*}L_F:\LDS&\to &\LDS\\
\xi&\mapsto &F\xi.\end{eqnarray*}
then its operator norm $\|L_F\|_*$ is equal to $\|F\|$.
\end{lemma}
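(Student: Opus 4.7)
My plan is to prove the two inequalities $\|L_F\|_* \leq \|F\|$ and $\|L_F\|_* \geq \|F\|$ separately. Both are standard facts about multiplication operators on $L^2$ of a compact space, and the continuity of $F$ plays an essential role only in the lower bound.

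For the upper bound, I would simply estimate pointwise: for any $\xi \in \LDS$,
$$\|L_F\xi\|_2^2 = \int_{S^1} |F(z)|^2 |\xi(z)|^2 \, dz \leq \|F\|^2 \int_{S^1} |\xi(z)|^2 \, dz = \|F\|^2 \|\xi\|_2^2,$$
using that $|F(z)| \leq \|F\|$ for every $z \in S^1$. Taking the supremum over $\xi$ of norm one gives $\|L_F\|_* \leq \|F\|$.

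For the lower bound, I would produce a sequence of test functions $\xi_n$ with $\|\xi_n\|_2 = 1$ for which $\|F\xi_n\|_2$ approaches $\|F\|$. Fix $\varepsilon > 0$ and pick $z_0 \in S^1$ with $|F(z_0)| > \|F\| - \varepsilon$ (which exists because the supremum of a continuous function on a compact space is attained). By continuity of $F$ at $z_0$, there is an open arc $U \subseteq S^1$ containing $z_0$ on which $|F(z)| > \|F\| - \varepsilon$ for every $z \in U$. Setting $\xi = \mathbb{1}_U / \sqrt{|U|}$, we have $\|\xi\|_2 = 1$ and
$$\|L_F\xi\|_2^2 = \frac{1}{|U|}\int_U |F(z)|^2 \, dz \geq (\|F\|-\varepsilon)^2,$$
so $\|L_F\|_* \geq \|F\| - \varepsilon$. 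Letting $\varepsilon \to 0$ concludes the proof.

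The proof is essentially routine; the only mildly delicate point is making sure to use continuity of $F$ (rather than just essential boundedness) to extract an arc on which $|F|$ is close to the supremum, which is needed to produce an honest normalized test function in $\LDS$. If one were working with $F \in L^\infty(S^1)$ instead, the same lower bound argument goes through using a positive-measure set on which $|F| > \|F\|_\infty - \varepsilon$, but here continuity makes the choice of arc immediate.
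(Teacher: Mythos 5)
Your proof is correct and follows essentially the same route as the paper's: the pointwise estimate for the upper bound, and a normalized indicator function of a small arc around a near-maximum point of $|F|$ for the lower bound. No issues.
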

\begin{proof}Let $z_0\in S^1$ be such that $|F(z_0)|=\|F\|$. By definition, $\|L_F\|_*=\sup\{\|F\xi\|_2|\xi\in\LDS,\|\xi\|_2=1\}$. Moreover, for $\xi\in\LDS$ we have that
$$\|F\xi\|_2=\sqrt{\int_{S^1}|F(z)\xi(z)|^2dz}\leq\|F\|\sqrt{\int_{S^1}|\xi(z)|^2dz}=\|F\|\|\xi\|_2,$$
so that $\|L_F\|_*\leq\|F\|$. To approach $\|F\|$ with elements of the form $\|F\xi\|_2$ and $\|\xi\|_2=1$, take $\epsilon>0$ and let $U_{\epsilon}$ be a neighborhood of $z_0$ such that $|F(z)|>|F(z_0)|-\epsilon>0$ for any $z\in U_{\epsilon}$ (if $\|F\|=0$ it's obvious). Define
$$\xi_{\epsilon}(z)=\left\{\begin{array}{cc}|U_{\epsilon}|^{-1/2}&\hbox{ if }z\in U_{\epsilon}\\
0&\hbox{ otherwise}\end{array}\right.$$
Then $\|\xi_{\epsilon}\|_2=1$ and $\|F\xi_{\epsilon}\|_2\geq |F(z_0)|-\epsilon=\|F\|-\epsilon$.
\end{proof}
It is now straightforward to deduce the following.
\begin{prop}The algebra isomorphism $\ch:\C\Z\to\LS$ extends to an isometric isomorphism $C^*_r(\Z)\to\CS$.\end{prop}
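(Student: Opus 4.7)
The plan is to combine the lemma just proved with the fact that the Fourier transform converts convolution on $\Z$ into pointwise multiplication on $S^1$. This will show that $\ch$ is already an isometry on $\C\Z$ when the latter is equipped with the operator norm $\|\cdot\|_*$, and the extension to the completions is then automatic.

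First I would verify the intertwining identity
$$\ch(f*\xi)\;=\;\ch(f)\cdot\ch(\xi)\qquad(f\in\C\Z,\ \xi\in\ell^2(\Z)),$$
which says that the regular representation of $f$ on $\ell^2(\Z)$ is carried by $\ch$ to the multiplication operator $L_{\ch(f)}$ on $\LDS$. For $\xi=\delta_m$ this reduces to the identity $\ch(\delta_n*\delta_m)=z^n z^m$ already noted, and it then extends to all $\xi\in\ell^2(\Z)$ by linearity and $\ell^2$-continuity, using the isometric isomorphism $\ch:\ell^2(\Z)\to\LDS$ established earlier, together with the fact that $L_{\ch(f)}$ is bounded on $\LDS$ (by the preceding lemma).

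Since $\ch$ is unitary on $\ell^2(\Z)$, conjugation by $\ch$ preserves operator norms, and the lemma then yields
$$\|f\|_*\;=\;\|L_{\ch(f)}\|_*\;=\;\|\ch(f)\|,$$
for every $f\in\C\Z$, where the last norm is the sup norm on $\CS$. Hence $\ch\colon(\C\Z,\|\cdot\|_*)\to(\LS,\|\cdot\|)$ is an isometric algebra homomorphism whose image is dense in $\CS$ by Stone-Weierstra\ss.

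To conclude, I would invoke the bounded linear extension theorem: an isometric algebra homomorphism between dense subalgebras of two Banach algebras extends uniquely to an isometric isomorphism of their completions, multiplicativity being preserved by continuity. Since $C^*_r(\Z)$ is by definition the $\|\cdot\|_*$-completion of $\C\Z$ and $\CS$ is the sup-norm completion of $\LS$, this produces the desired isometric isomorphism $C^*_r(\Z)\to\CS$. The only substantive step is the intertwining identity above; once it is in place the rest follows formally.
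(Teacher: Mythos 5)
Your proof is correct and follows exactly the route the paper intends: the paper gives no explicit argument beyond "it is now straightforward to deduce," relying on the preceding lemma $\|L_F\|_*=\|F\|$ together with the intertwining of convolution and multiplication under $\ch$, the unitarity of $\ch$ on $\ell^2(\Z)$, and the Stone--Weierstra\ss\ density of $\LS$ in $\CS$, all of which you supply. Nothing is missing.
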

We now end this discussion by looking at smooth functions. Notice that, for $F\in{\mathcal C}^1(S^1)$, if $F(z)=\sum_{n\in\Z}A_nz^n$, its derivative is given by
$$F'(z)=\sum_{n\in\Z}inA_nz^n,$$ 
so that in case where $F\in{\mathcal C}^k(S^1)$, the $k$-th derivative is given by 
$$F^{(k)}(z)=\sum_{n\in\Z}i^kn^kA_nz^n.$$ 
Since everything lives in $\LDS$, we have that $F\in\CS$ is actually in $\DS$ if and only if all its derivatives are in $\LDS$, which means that
$$\sum_{n\in\Z}|n|^{2k}|A_n|^2<\infty$$
for any $k\in\N$. With a minor change of $|n|$ by $(1+\ell_{\Z}(n))$ (up to a constant, depending on the chosen finite generating set for $\Z$) we see that $F\in\DS$ if and only if it is the Fourier transform of an element in $H^{\infty}(\Z)$. To summarize, we have obtained the following identifications:
\begin{eqnarray*}\Z &\longleftrightarrow & S^1\\
\C\Z & \simeq &\LS\\
H^{\infty}(\Z)&\simeq &\DS\\
C^*_r(\Z)&\simeq&\CS\\
\ell^2(\Z)&\simeq &\LDS
\end{eqnarray*}
In this case, the Rapid Decay property for $\Z$ in the sense of the inclusion RD(2) amounts to the well-known inclusion $\DS\subseteq\CS$ because smooth functions are in particular continuous.
\section{Equivalent definitions of property RD}\label{equivalences}
In this section we shall see several equivalent definitions of the Rapid Decay property, giving us a bigger flexibility for using that property. See also \cite{Bo} for more equivalent definitions.
\begin{prop}\label{RD}Let $G$ be a discrete group, endowed with a length function $\ell$. Then the following are equivalent:
\begin{enumerate}
\item\label{RD1} There are constants $C$ and $D$ such that for any $R\in\N$ and any $f\in\cg$ such that $f$ is supported on elements shorter than $R$, the following inequality holds:
$$\|f\|_*\leq CR^D\|f\|_2,\eqno{RD(1)}$$
\item\label{RD2} The following containment holds
$$H^\infty_\ell(G)\subseteq C^*_r(G)\eqno{RD(2)}$$
\item\label{RD3}There exists constants $c,s\geq 0$ such that, for each $f\in\cg$ one has
$$\|f\|_{*}\leq c\|f\|_{\ell,s}\eqno{RD(3)}$$
\item \label{RD4} There are constants $C$ and $D$ such that for any $R\in\N$ and any $f,g\in\cg$ such that $f$ is supported on elements shorter than $R$, the following inequality holds:
$$\|f*g\|_*\leq CR^D\|f\|_2\|g\|_2,\eqno{RD(4)}$$
\item\label{RD5}There are constants $C$ and $D$ such that for any $R\in\N$ and any $f,g,h\in\cg$ such that $f$ is supported on elements shorter than $R$, the following inequality holds:
$$|f*g*h(e)|\leq CR^D\|f\|_2\|g\|_2\|h\|_2\eqno{RD(5)}$$
\item\label{RD6} There are constants $M$ and $k\geq 0$ such that for any $\xi,\eta\in\etg$ or norm one, the coefficients of the regular representation satisfy
$$\sum_{\gamma\in G}{{|\left<\gamma(\xi),\eta\right>|^2}\over{(1+\ell(\gamma))^{2k}}}\leq M\eqno{RD(6)}$$
\item\label{RD7} There are constants $C$ and $D$ such that for any $R\in\N$ and for any $\xi,\eta\in\etg$ or norm one, we have that the coefficients of the regular representation satisfy
$$\sum_{\gamma\in B_R}|\left<\gamma(\xi),\eta\right>|^2\leq CR^{2D}.$$
\end{enumerate}\end{prop}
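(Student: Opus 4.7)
The plan is to prove all seven conditions equivalent by grouping them into three clusters: the operator-norm bounds on $\cg$-functions, namely (1), (3), (4), (5); the algebraic containment (2); and the coefficient-decay formulations (6) and (7). I would organize the implications around (3), first establishing (1) $\Leftrightarrow$ (3) $\Leftrightarrow$ (4) $\Leftrightarrow$ (5) within the first cluster, then (2) $\Leftrightarrow$ (3), and finally (3) $\Leftrightarrow$ (6) $\Leftrightarrow$ (7). This minimizes repetition, because each equivalence turns out to be a short duality or Cauchy-Schwarz calculation once the right quantity has been identified.

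The pivotal step is (1) $\Rightarrow$ (3). Given $f \in \cg$, I would decompose $f = \sum_{n \ge 0} f_n$ where $f_n$ is the restriction of $f$ to the sphere $\{\gamma \in G \mid \ell(\gamma) = n\}$, apply (1) to each $f_n$ (now supported in the ball of radius $n$), and use Cauchy-Schwarz with weights $(1+n)^{-1}$:
\[
\|f\|_* \le \sum_{n \ge 0} \|f_n\|_* \le C \sum_{n \ge 0} (1+n)^D \|f_n\|_2 \le C \Bigl(\sum_{n \ge 0} (1+n)^{-2}\Bigr)^{1/2} \|f\|_{\ell, D+1}.
\]
Since the series converges, this yields (3) with $s = D+1$. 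The reverse (3) $\Rightarrow$ (1) is trivial since $\|f\|_{\ell, s} \le (1+R)^s \|f\|_2$ when $f$ is supported in the ball of radius $R$. The equivalences (1) $\Leftrightarrow$ (4) and (4) $\Leftrightarrow$ (5) follow from the standard operator inequality $\|f*g\|_2 \le \|f\|_* \|g\|_2$ and from the identity $f*g*h(e) = \langle f*g, h^*\rangle$ together with Cauchy-Schwarz, reading (4) as the natural $\ell^2$-bound on the convolution, which is the form the argument delivers.

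For (2) $\Leftrightarrow$ (3), the direction (3) $\Rightarrow$ (2) is immediate by density, while the converse views the inclusion $H^\infty_\ell(G) \hookrightarrow \crg$ as a linear map from a Fréchet space into a Banach space whose graph is closed (both topologies dominate pointwise convergence on $G$); the closed graph theorem then produces constants $c, s$ realizing (3) on $H^\infty_\ell(G)$, hence on $\cg$. The equivalence (3) $\Leftrightarrow$ (6) rests on the identity $\langle f\xi, \eta\rangle = \sum_\gamma f(\gamma)\langle \gamma\xi, \eta\rangle$: the direction (6) $\Rightarrow$ (3) is weighted Cauchy-Schwarz with weights $(1+\ell(\gamma))^{\pm s}$, while for (3) $\Rightarrow$ (6) I would plug in the tailored test function
\[
f_F(\gamma) = \mathbb{1}_F(\gamma)(1+\ell(\gamma))^{-2s}\overline{\langle \gamma\xi, \eta\rangle}
\]
for finite $F \subset G$, chosen so that $\|f_F\|_{\ell,s}^2$ equals $\langle f_F \xi, \eta\rangle$ (the partial sum appearing in (6)); applying (3) and letting $F \uparrow G$ then yields the required bound. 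Finally, (6) $\Leftrightarrow$ (7) is a partition of $G$ into spheres combined with the same kind of convergent series. The main obstacle throughout is the spherical-decomposition step in (1) $\Rightarrow$ (3): identifying a weight that simultaneously absorbs the polynomial factor and produces a legitimate Sobolev norm on the right-hand side; once this trick is in place, the remaining implications become essentially formal.
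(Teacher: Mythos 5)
Your proposal is correct and follows essentially the same architecture as the paper: the spherical decomposition with weights $(1+n)^{-1}$ for $(1)\Rightarrow(3)$, the closed graph theorem for $(2)\Rightarrow(3)$, the duality cycle through $(4)$ and $(5)$ read as $\ell^2$-bounds, and matrix-coefficient test functions for $(6)$ and $(7)$. The only (harmless, and slightly cleaner) deviations are local: your test function $f_F$ proves $(3)\Rightarrow(6)$ directly with $k=s$ where the paper decomposes over spheres and gets $k=s+1$, and you route $(7)$ through $(6)$ whereas the paper proves $(1)\Leftrightarrow(7)$ directly.
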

Condition RD(6) was first given by Breuillard at an AIM conference on the property of Rapid Decay, and RD(7) is due to Perrone in \cite{Perrone}.
\begin{rem}\label{Rplus}There are variations on each equivalent conditions:
\begin{enumerate}
\item For instance one can always choose $f\in{\R}_+G$ instead of $\cg$. Indeed, to go from ${\bf R}_+G$ to $\cg$ (for instance in RD(1), we write $f\in\cg$ as $f=f_1-f_2+i(f_3-f_4)$ with $f_i\in{\bf R}_+G$ and the supports of $f_i$ and $f_{i+1}$ disjoints for $i=1,3$, then $\|f\|^2_2=\sum_{i=1}^4\|f_i\|^2_2$ and thus
$$\|f\|_*\leq\sum_{i=1}^4\|f_i\|_*\leq CR^D\sum_{i=1}^4\|f_i\|_2\leq \sqrt{4}CR^D\|f\|_2.$$
\item All those conditions are expressed in terms of the regular representation (see Definition \ref{regrep}) but most conditions can be reformulated in terms of any unitary representation with some obvious changes. 

\item The length does not need to be a word length, a similar string of equivalences holds for other length (even in case where $G$ is not finitely generated), and the proofs are easily adapted (the length may not have integer values only).
\end{enumerate}
The multiplicative constants are unimportant and vary from one point to another. The exponent are more interesting and do vary as well. It is the same exponent $D$ for points (1), (4), (5) and (7), but the exact relation between $D,s$ and $k$ seems unclear. According to Nica in \cite{Nica}, the degree in the sense of RD(3) has to be greater than 1/2 and knowing the exact degree has interesting consequences in view of Theorem \ref{CPS}, but most methods for establishing the Rapid Decay property give a bad estimate for that degree.\end{rem}
\begin{proof} We will show that $(1)\Longleftrightarrow (3)$, then $(2)\Longleftrightarrow (3)$, and $(1)\Longrightarrow(4)\Longrightarrow(5)\Longrightarrow(1)$, then $(3)\Longleftrightarrow (6)$, and finally $(1)\Longleftrightarrow (7)$.

\medbreak

\underline{$(3) \Longleftrightarrow (1)$}: Take $f\in\cg$ with support contained in $B_R$ (a ball of radius $R$), we have assuming (3) that:
\begin{eqnarray*}\|f\|_*\leq c\|f\|_{\ell,s}=c\sqrt{\sum_{\gamma\in B_R}|f(\gamma)|^2(R+1)^{2s}}\leq c(R+1)^s\|f\|_2\leq CR^s\|f\|_2\end{eqnarray*}
and thus (1) is satisfied, for $D=s$ and $C$ depending on $c$ and $s$. 

Conversely, we denote, for $n\in\N$ by $S_n=\{\gamma\in G|\ell(\gamma)=n\}$ the sphere of radius $n$ and compute, for $f\in\cg$:
$$\|f\|_*=\|\sum_{n=0}^\infty f|_{S_n}\|_*\leq\sum_{n=0}^\infty\|f|_{S_n}\|_*.$$
So, assuming (1) we have that 
\begin{eqnarray*}\|f\|_*&\leq &\sqrt{|f(e)|}+\sum_{n=1}^\infty Cn^D\|f|_{S_n}\|_2\leq \sum_{n=0}^\infty K(n+1)^D\|f|_{S_n}\|_2\\
&\leq&K\sum_{n=0}^\infty (n+1)^{-1}(n+1)^{D+1}\|f|_{S_n}\|_2\\
&\leq&K\underbrace{\sqrt{\sum_{n=0}^\infty (n+1)^{-2}}}_{\pi/\sqrt{6}}\sqrt{\sum_{n=0}^\infty (n+1)^{2D+2}\|f|_{S_n}\|_2^2}\\
&\leq &c\sqrt{\sum_{n=0}^\infty\sum_{\gamma\in S_n}|f(\gamma)|^2(\ell(\gamma)+1)^{2D+2}}=c\|f\|_{\ell,D+1}.\end{eqnarray*}
Hence we get (3) for $s=D+1$. 

\medbreak

\underline{$(2) \Longleftrightarrow  (3)$}: First notice that RD(3) is equivalent to $H^s_\ell(G)\subseteq \crg$, so that $(3)$ implies $(2)$ is obvious since $H^\infty_\ell(G)\subseteq H^s_\ell(G)$, hence we will be looking at the other implication. Let us first prove that the graph of the inclusion $H^\infty_\ell(G)\to\crg$ is closed. Indeed, let $\{f_n\}_{n\in\N}$ in $H^\infty_\ell(G)$ tend to $f$ in in $H^\infty_\ell(G)$ and the image under the inclusion $\{f_n \}_{n\in\N}$ tend to $g$ in $\crg$, we need to prove that $f=g$. Since operator convergence implies weak *-convergence, $\{\left<f_n*\xi,\eta\right>\}_{n\in\N}$ converges to $\left<g*\xi, \eta\right>$ for any $\xi,\eta\in\etg$. Since $\{f_n\}_{n\in\N}$ tends to $f$ in $\etg$ as well, it implies that $\{f_n*\xi*\eta(e)\}_{n\in\N}$ converges to $f *\xi*\eta(e)$ for any $\xi,\eta\in\cg$. According to Remark \ref{prodscal}, $f *\phi*\psi(e)=\left<f*\phi, \psi^*\right>$, so we conclude that $f = g$. The closed graph theorem in the generality of Proposition 1, Chapter I page 20 of \cite{Bou}, applied to the Fr\'echet space $H^\infty_\ell(G)$ and to $\crg$ (viewed as a Banach space) then implies that the inclusion RD(2) is continuous. This by definition amounts to the existence of an $s>0$ and $c>1$ such that $\|f\|_*\leq c\|f\|_{\ell,s}$ for any $f\in H^\infty_\ell(G)$, an hence in particular for any $f\in\cg$.

\medbreak

\underline{$(1)\Longrightarrow(4)\Longrightarrow(5)\Longrightarrow(1)$}: That (1) implies (4) is obvious since for any $g\in\cg$, by definition of the operator norm of $f$ we have that 
$${\|f*g\|_2\over{\|g\|_2}}\leq\|f\|_*.$$
That (4) implies (5) follows from Cauchy-Schwartz inequality:
$$|f*g*h(e)|\leq\sum_{\gamma\in G}|f*g(\gamma)h^*(\gamma)|\leq\|f*g\|_2\|h\|_2$$
where $h^*(\gamma)=\overline{h(\gamma^{-1})}$.
To see that (5) implies (1) it is enough to define, for $\gamma\in G$
$$h(\gamma)=\frac{f*g(\gamma^{-1})}{\|f*g\|_2},$$
and notice that in that case $f*g*h(e)=\|f*g\|_2$ and $\|h\|_2=1$. Then, for any $\epsilon>0$, by the definition of the operator norm and by density of $\cg$ in $\etg$, there is $g_\epsilon\in\cg$ such that:
$$\|f\|_*-\epsilon\leq\frac{\|f*g_\epsilon\|_2}{\|g_\epsilon\|_2}$$
and since the above inequality holds for any $\epsilon>0$ we recover point (1).

\medbreak

\underline{$(3) \Longleftrightarrow (6)$}: Let us first assume RD(3) and take $k=s+1$. For any $n\in\N$ we denote by $S_n$ the sphere of radius $n$, that is the elements in $G$ of length exactly equal to $n$. We set
$$a_n(\gamma)=\mathbb{1}_{S_n}(\gamma){\overline{\left<\gamma(\xi),\eta\right>}\over{(1+n)^{2k}}}$$
and compute
\begin{eqnarray*}\sum_{\gamma\in S_n}{|\left<\gamma(\xi),\eta\right>|^2\over{(1+\ell(\gamma))^{2k}}}&=&\sum_{\gamma\in S_n}\left<\gamma(\xi),\eta\right>a_n(\gamma)=\left<a_n(\xi),\eta\right>\leq \|a_n\|_*\\
&\leq & c\|a_n\|_{\ell,s}=c\sqrt{\sum_{\gamma\in S_n}{|\left<\gamma(\xi),\eta\right>|^2\over{(1+\ell(\gamma))^{4k}}}(1+\ell(\gamma))^{2s}}\\
&=&c{1\over (1+n)}\sqrt{\sum_{\gamma\in S_n}{|\left<\gamma(\xi),\eta\right>|^2\over{(1+\ell(\gamma))^{2k}}}}.\end{eqnarray*}
Where $c$ is a constant that does not depend on $\xi$ and $\eta$. Hence,
$$\sum_{\gamma\in G}{{|\left<\gamma(\xi),\eta\right>|^2}\over{(1+\ell(\gamma))^{2k}}}=\sum_{n=1}^\infty\sum_{\gamma\in S_n}{|\left<\gamma(\xi),\eta\right>|^2\over{(1+\ell(\gamma))^{2k}}}\leq \sum_{n=1}^\infty c{1\over (1+n)^2}\leq M.$$
Conversely, assuming RD(6), we take $f\in\cg$ supported on a ball of radius $R$ and $\xi\in\etg$ of norm one and so that $f(\xi)\not=0$ (for instance any Dirac mass would do), then, for $\eta=f(\xi)/\|f(\xi)\|_2$ we have that (using Cauchy-Schwartz inequality):
\begin{eqnarray*}\|f(\xi)\|_2&=&\left<f(\xi),\eta\right>=\sum_{\gamma\in G}|f(\gamma)|\left<\gamma(\xi),\eta\right>=\sum_{\gamma\in G}{\left<\gamma(\xi),\eta\right>\over{(1+\ell(\gamma))^k}}|f(\gamma)|(1+\ell(\gamma))^k\\
&\leq &\sqrt{\sum_{\gamma\in G}{\left<\gamma(\xi),\eta\right>\over{(1+\ell(\gamma))^k}}}\|f\|_{\ell,k}\leq M\|f\|_{\ell,k}.\end{eqnarray*}
Since the operator norm of $f$ is the supremum over all $\xi\in\etg$ of norm one of the left hand side of the above inequality and $M$ does not depend on $\xi$, we deduce RD(3), with $s=k$.

\medbreak

\underline{$(1) \Longleftrightarrow (7)$}: To prouve the direct implication, take $\xi,\eta\in\etg$ of norm one, $R\geq 1$ and define
$$f(\gamma)=\mathbb{1}_{B_R}\overline{\left<\gamma(\xi),\eta\right>},$$
we compute
\begin{eqnarray*}\sum_{\gamma\in B_R}|\left<\gamma(\xi),\eta\right>|^2&=&\sum_{\gamma\in B_R}\left<\gamma(\xi),\eta\right>f(\gamma)=\left<f(\xi),\eta\right>\leq\|f\|_*\leq CR^D\|f\|_2\\
&=&CR^D\sqrt{\sum_{\gamma\in B_R}|\left<\gamma(\xi),\eta\right>|^2},\end{eqnarray*}
and RD(7) follows. Conversely assuming RD(7), we take $f\in\cg$ and $\xi\in\etg$ of norm one and so that $f(\xi)\not=0$ (for instance any Dirac mass would do), then, for $\eta=f(\xi)/\|f(\xi)\|_2$ we have that (using Cauchy-Schwartz inequality):
\begin{eqnarray*}\|f(\xi)\|_2&=&\left<f(\xi),\eta\right>=\sum_{\gamma\in G}f(\gamma)\left<\gamma(\xi),\eta\right>=\sum_{\gamma\in B_R}f(\gamma)\left<\gamma(\xi),\eta\right>\\
&\leq &\|f\|_2\sqrt{\sum_{\gamma\in B_R}|\left<\gamma(\xi),\eta\right>|^2}\leq CR^D\|f\|_2.\end{eqnarray*}
Since the operator norm of $f$ is the supremum over all $\xi\in\etg$ of norm one of the left hand side of the above inequality and $C$ does not depend on $\xi$, we deduce RD(1), with the same exponent $D$.
\end{proof}
\begin{rem}\label{otherlength}\label{sousgroupes} \begin{enumerate}\item Using either RD(\ref{RD1}) or RD(\ref{RD3}), we immediately see that any subgroup $H$ of $G$ has property RD for the induced length. Indeed, since if $H$ is a subgroup of $G$, $f\in\C H$ supported in a ball of radius $r$ can be viewed in $\cg$, supported in a ball of radius $r$ as well and the right hand side doesn't change, whereas on the left hand side, the operator norm of $f$ acting on $\ell^2(H)$ is smaller than the operator norm of $f$ acting on $\etg$ since there are more elements in $\etg$ to evaluate $f$ on.
\item According to Remark \ref{wordlengthdom}, the world length dominates any other length, so balls in the word length are the smallest, hence if a group has the Rapid Decay property with respect to any thength, then in particular it will have the Rapid decay property with respect to any word length. This is the reason why we often omit specifying the length, assuming that we only deal with the word length.
\end{enumerate}
\end{rem}
\begin{ex} For a discrete group $G$, the map $\ell_0:G\to{\R}_+$ defined by $\ell_0(1)=0$ and $\ell_0(\gamma)=K$ for any $\gamma\in G$ and some constant $K$ is a length function, and $G$ has the Rapid Decay property with respect to $\ell_0$ if and only if $G$ is finite. Indeed, if $G$ has the Rapid Decay property with respect to $\ell_0$, then there exists a constant $C$ such that for any $f,g\in{\cg}$ then $\|f*g\|_2\leq C\|f\|_2\|g\|_2$, which implies that $\etg$ is an algebra. This can happen if and only if $G$ is finite, see \cite{Raj}. The same statement holds if we just assume $\ell_0$ to be bounded.\end{ex}



\begin{thebibliography}{6}
\bibitem{AC} C. Antonescu, E. Christensen. \emph{Group C*-algebras, metrics and an operator theoretic inequality.} Journal of Functional Analysis
Volume 214, Issue 2, 15 September 2004, Pages 247--259.
\bibitem{AD} G. Arzhantseva, C. Dru\c tu. \emph{Geometry of infinitely presented small cancellation groups, Rapid Decay and quasi-homomorphisms.} arXiv:1212.5280
\bibitem{BP}S. Barr\'e and M. Pichot. \emph{The 4-string braid group $B_4$ has property RD and exponential mesoscopic rank.}
Bulletin de la SMF 139 (4), 2011, p. 479--502.
%
\bibitem{BPW}S. Barr\'e and M. Pichot. \emph{Property RD for D. Wise's non Hopfian group.} Annales de l'institut Fourier, 65 no. 2 (2015), p. 709--724.
%
\bibitem{BM}J. A. Behrstock, Y. N. Minsky. \emph{Centroids and the rapid decay property in mapping class groups.} J. Lond. Math. Soc. (2) 84 (2011), no. 3, 765Ð784.
\bibitem{Bhatia}
R. Bhatia.
\newblock \emph{Fourier Series}
\newblock Text and readings in Mathematics {\bf 2}, Second Edition 2003, Hindustan Book agency.
\bibitem{Bou} Bourbaki, N. \emph{Espaces vectoriels topologiques. Chapitres 1 \`a 5.}, \'El\'ements de math\'ematiques.
\bibitem{BroNi} J. Brodzki and G. A. Niblo. \emph{Approximation properties for discrete groups.} C*-algebras and elliptic theory, Trends in Mathematics, Birkhauser (2006) pp 23--35.
\bibitem{Bow}B. H. Bowditch. \emph{Embedding median algebras in products of trees.} Geom. Dedicata (2014) 170:157Ð176.
\bibitem{Bo} A. Boyer. \emph{Quasi-regular representations and property RD.}\\
Preprint http://arxiv.org/abs/1305.0480
\bibitem{Boyer2}A. Boyer. \emph{Semisimple Lie groups satisfy property RD, a short proof.} Comptes Rendus Math\'ematique, Volume 351, Issue 9, Pages 335--338 (2013).
\bibitem{CaGl}E. Candelloro and L. Gilch. \emph{Phase Transitions for Random Walk Asymptotics on Free Products of Groups.} Random Structures and Algorithms, Vol. 40, Issue 2, 150 -- 181, (2012).
\bibitem{Car} D. Cartwright. \emph{Some examples of random walks of free products on discrete groups.} Annali Matematica Pura ed Applicata. Serie Quarta, 106 (1988), 1--15.
\bibitem{moi}
I. Chatterji.
\newblock \emph{Property (RD) for uniform lattices in products of rank one Lie groups with some rank two Lie groups.}
\newblock  Geometriae Dedicata 96 (2003), 161--177.
\bibitem{wKim}
I. Chatterji, K. Ruane. \emph{Some geometric groups with the Rapid Decay property.} Geom. Funct. Anal. 15 (2005), no. 2, 311--339.
\bibitem{CPSC}
I. Chatterji, Ch. Pittet and L. Saloff-Coste. \emph{Connected Lie groups and property RD.}
Duke Mathematical Journal, vol. 137, No 3 (2007), 511 - 536.
\bibitem{CSC} I. Chatterji and L. Saloff-Coste. \emph{Introduction to the property of Rapid decay.} Notes not intended for publication, available at:\\ http://www.aimath.org/WWN/rapiddecay/IntroRD.pdf
\bibitem{CPSC2}
I. Chatterji, Ch. Pittet and L. Saloff-Coste. \emph{On the decay of the heat kernel.} Work in progress.
\bibitem{CHR} L. Ciobanu, D. F. Holt, S. Rees. \emph{Rapid decay is preserved by graph products.} J. Topol. Anal. 5 (2013), no. 2, 225Ð237.
\bibitem{CHRArtin}L. Ciobanu, D. F. Holt, S. Rees. \emph{Rapid decay and Baum-Connes for large type Artin groups.} arXiv:1203.1198 .
\bibitem{DruSa} C. Dru\c tu and M. Sapir \emph{Relatively hyperbolic groups with rapid decay property.} Int. Math. Res. Not. 2005, no. 19, 1181--1194.
\bibitem{ConNCG}
A.\ Connes.
\newblock\emph{Noncommutative geometry.}
\newblock Academic Press, \emph{1994}.
\bibitem{CoMo} A. Connes and H. Moscovici. \emph{Cyclic cohomology, the Novikov conjecture and hyperbolic groups.} Topology 29 (1990), no. 3, 345--388.
\bibitem{Ga} L. Garncarek. \emph{Property of rapid decay for extensions of compactly generated groups}
Publicacions Matem\`atiques, Volume 59,  Issue 2  (2015)  pp. 301--312.
\bibitem{Gau} F. Gautero. \emph{Rapid Decay and 3-manifold groups.} Preprint 2016.
\bibitem{Gouezel} S. Gouezel. \emph{Local limit theorem for symmetric random walks in Gromov-hyperbolic groups.} Journal of the A.M.S. 27:893-928, 2014.
\bibitem{GriNa} R. Grigorchuk, T. Nagnibeda. \emph{Complete growth functions of hyperbolic groups.} Invent. Math. 130 (1997), no. 1, 159Ð188.
\bibitem{Haagerup}
U.\ Haagerup.
\newblock\emph{An example of a nonnuclear C*-algebra which has the metric approximation property.}
\newblock Invent. Math. {\bf 50} \emph{1979}, 279--293.
\bibitem{Harpe}
P.\ de la Harpe. 
\newblock \emph{Groupes hyperboliques, alg\`ebres d'op\'erateurs et un th\'eor\`eme de Jolissaint.}
\newblock C. R. Acad. Sci. Paris S\'er. I {\bf 307} \emph{(1988)}, 771--774.
\bibitem{HRV} P. de la Harpe, G. A. Robertson, A. Valette. \emph{On the spectrum of the sum of generators of a finitely generated group.} II. Colloq. Math. 65 (1993), no. 1, 87Ð102.
\bibitem{Hertz} C. Herz. \emph{Sur le ph\'enom\`ene de Kunze-Stein.} C. R. Acad. Sci. Paris S\'er. A-B 271 (1970), A491--A493.
\bibitem{JiSch}R. Ji and L. Schweitzer. \emph{Spectral Invariance of Smooth Crossed Products, and Rapid Decay Locally Compact Groups.} K-Thheory 10: 283-- 305, 1996.
\bibitem{Jolissaint}
P.\ Jolissaint.
\newblock \emph{Rapidly decreasing functions in reduced C*-algebras of groups.}
\newblock Trans. Amer. Math. Soc. {\bf 317} \emph{(1990)}, 167--196.
\bibitem{JolK}
P. Jolissaint. \emph{$K$-Theory of Reduced C*-Algebras and Rapidly Decreasing Functions on Groups.} K-Theory 2(6):723--735. October 1989
\bibitem{Lafforgue}
V.\ Lafforgue.
\newblock \emph{A proof of property (RD) for discrete cocompact subgroups of $SL_3({\bf R})$ and $SL_3({\bf C})$.}
\newblock Journal of Lie Theory {\bf 10}, \emph{(2000)}, 255--267.
\bibitem{LaffBC}V.\ Lafforgue. \emph{KK-th\'eorie bivariante pour les alg\`ebres de Banach et conjecture de Baum-Connes.} Invent. Math. 149,1-95, 2002.
\bibitem{Lal} S. P. Lalley. \emph{Finite range random walk on free groups and homogeneous trees.} Ann.
Probab. 21 (1993), no. 4, 2087Ð2130.
\bibitem{Lep} H. Leptin, {\it On locally compact groups with invariant means}, Proc. Amer. Math. Soc. 19(1968),489Ð494. MR0239001
\bibitem{LMR}
A. Lubotzky, S. Mozes, M. S. Raghunathan.
\newblock \emph{Cyclic subgroups of exponential growth and metrics on discrete groups}
\newblock C. R. Acad. Sci. Paris, t.317, S\'er. I,p.735--740, \emph{(1993)}.
\bibitem{NP} I. Pak, T. Smirnova-Nagnibeda. \emph{On non-uniqueness of percolation on non-amenable Cayley graphs.} C. R. Acad. Sci. Paris S\'er. I Math. 330 (2000), no. 6, 495--500.
\bibitem{Ne1} A. Nevo. \emph{Spectral transfer and pointwise ergodic thms for Kazhdan groups.} Math Research Letters 5 (1998), 305--325.
\bibitem{Ne2} A. Nevo \emph{On discrete groups and pointwise ergodic theory.} Proceedings of Cortona Conference 1997.
\bibitem{Nica} B. Nica. \emph{On the degree of rapid decay.} Proc. Amer. Math. Soc. 138 (2010), no. 7, 2341--2347.
\bibitem{Osin} D. V. Osin. \emph{Property (RD) for C'(1/6)-groups.} Preprint, 2014.
\bibitem{Perrone}M. Perrone. \emph{Rapid Decay and weak containment of unitary representations.} Unpublished.
\bibitem{Perrone2} M. Perrone. \emph{Radial rapid decay property for cocompact lattices.} Journal of Functional Analysis 256 (2009) 3471--3489.
\bibitem{Raj}
M. Rajagopalan. 
\newblock \emph{On the $L\sp{p}$-space of a locally compact group.} 
\newblock Colloq. Math. 10 1963 49--52.
\bibitem{RRS}
J.\ Ramagge, G.\ Robertson, T.\ Steger. 
\newblock \emph{A Haagerup inequality for $\widetilde A\sb 1\times\widetilde A\sb 1$ and $\widetilde A\sb 2$ buildings.}
\newblock Geom. Funct. Anal. 8 \emph{(1998)}, no. {\bf 4}, 702--731. 
\bibitem{Sa}
M. Sapir, \emph{The Rapid Decay property and centroids in groups.}
\bibitem{Talbi}M. Talbi. \emph{A Haagerup inequality, deformation of triangles and affine buildings.}
Journal of the Institute of Mathematics of Jussieu. Volume 5, Issue 02, April 2006, pp 187--227
\bibitem{Valette}
A. Valette.
\newblock \emph{An Introduction to the Baum-Connes Conjecture.}
\newblock  Lectures Notes in Mathematics, ETH Z\"urich, Birkh\"auser.
\bibitem{ValRad} A. Valette. \emph{On the Haagerup inequality and group acting on buildings.} Ann. Inst. Fourier (Grenoble), 47 (4) (1997), pp. 1195--1208.
\bibitem{Woess} W. Woess. \emph{Random walks on infinite graphs and groups.} Cambridge Tracts in Mathematics, vol. 138, Cambridge University Press, Cambridge, 2000.


\end{thebibliography}
\end{document}